\newcommand{\R}{\mathbb{R}}
\newcommand{\N}{\mathbb{N}}
\DeclareMathOperator{\dom}{dom}
\DeclareMathOperator{\Dom}{Dom}
\DeclareMathOperator{\range}{Ran}
\DeclareMathOperator{\graph}{Gr}
\DeclareMathOperator{\proy}{Proj}
\DeclareMathOperator{\epi}{epi}
\newcommand{\eps}{\varepsilon}
\newcommand{\inner}[2]{\langle{#1},{#2}\rangle}
\newcommand{\ds}{\displaystyle}
\newcommand{\tos}{\rightrightarrows} % point-to-set mappings
\newtheorem{theorem}{Theorem}%[section]
\newtheorem{lemma}[theorem]{Lemma}
\newtheorem{proposition}[theorem]{Proposition}
\newtheorem{fact}{Fact}
\theoremstyle{definition}
\newtheorem{definition}[theorem]{Definition}
\title{Lower Limits of Type (D) Monotone Operators in general Banach Spaces}
\author{Orestes Bueno\thanks{IMCA - Instituto de Matem\'aticas y Ciencias Afines. Email: \texttt{obueno@imca.edu.pe}} \and Yboon Garc\'ia\thanks{CIUP - Centro de Investigaci\'on de la Universidad del Pac\'ifico. Email: \texttt{garcia\_yv@up.edu.pe}} \and Maicon Marques Alves\thanks{Universidade Federal de Santa Catarina. Email: \texttt{maicon.alves@ufsc.br}}}
\begin{document}

\maketitle

\begin{abstract}
We give, for general Banach spaces, a characterization of the sequential lower limit of maximal monotone operators of type (D) and prove its representability. As a consequence, using a recent extension of the Moreau-Yosida regularization for type (D) operators, we extend to general Banach spaces the definitions of the variational sum of monotone operators and the variational composition of monotone operators with continuous linear mappings, and we prove that both operators are representable.
\bigskip

\noindent{\bf Keywords:} Banach spaces, Monotone operators, Variational Sum, Variational Composition, Type (D) operators, Moreau-Yosida regularization

\end{abstract}

\section{Introduction}
The sequential lower limit of maximal monotone operators was studied by Garc\'ia and Lassonde~\cite{GarciaLassonde2012}, for reflexive Banach spaces provided with a strictly convex norm and dual norm, and then applied to prove the representability of the \emph{variational sum} (see \cite{AtBaTh94,RevThe99-1,RevThe99-2}) and the \emph{variational composition} (see \cite{PeReTh03}). The key tools for such work were Minty-Rockafellar surjectivity theorem (see Fact~\ref{fact:rock70} or~\cite{Rock70}) and the Moreau-Yosida regularization, adapted for reflexive Banach spaces by Brezis, Crandall and Pazy~\cite{BCP70}.

Unfortunately, such tools are not available in general for maximal monotone operators in non-reflexive Banach spaces. %, mainly because of 
However, thanks to some recent work of Marques Alves and Svaiter~\cite{MAS11,BM2011}, the aforementioned results are available for maximal monotone operators \emph{of type (D)}, introduced by Gossez~\cite{JPGos0}. Observe that type (D) operators have many nice properties that maximal monotone operators have in reflexive Banach spaces.
Thus, for this kind of operators, we have a weak version of Minty-Rockafellar theorem (using $J_{\eps}$ instead of the duality mapping $J$, see Fact~\ref{fact:typedeq}) and a general version of the Moreau-Yosida regularization (Fact~\ref{fact:myreg}).

In this paper, we extend the characterization of the lower limit given in~\cite{GarciaLassonde2012} to any Banach space (Proposition~\ref{prop:main}), and prove that the lower limit of a sequence of type (D) operators is representable (Theorem~\ref{teo:main}).  We also extend the definition of variational sum and variational composition (section~\ref{sec:varsumcomp}) to any Banach space when the operators involved are of type (D) and prove that the variational sum and the variational composition are representable (Theorems~\ref{teo:varcomp} and \ref{teo:varsum}). 

We must emphasize that our study is not a simple generalization from reflexive to non-reflexive, since even in the reflexive case we can drop the norm-regularity conditions asked in~\cite{GarciaLassonde2012}.

\section{Preliminary definitions and notations}
Let $U,V$ be non-empty sets. For a set $S\subset U\times V$, we denote the projections over $U$ and $V$ as $\proy_U(S)=\{u\in U\::\:\exists\,v\in V,\,(u,v)\in S\}$ and 
$\proy_V(S)=\{v\in V\::\:\exists\,u\in U,\,(u,v)\in S\}$.

A \emph{multivalued operator} $T:U\tos V$ is an application $T:U\to \mathcal{P}(V)$, that is, for $u\in U$, $T(u)\subset V$. The \emph{graph}, \emph{domain} and \emph{range} of $T$ are defined, respectively, as
\begin{gather*}
\graph(T)=\big\{(u,v)\in U\times V\::\: v\in T(u)\big\},\\
\Dom(T)=\big\{u\in U\::\: T(u)\neq\emptyset\big\}=\proy_U(\graph(T)),\\
\range(T)=\bigcup_{u\in U}T(u)=\proy_V(\graph(T)).
\end{gather*}
In addition, $T^{-1}:V\tos U$ is defined as $T^{-1}(v)=\{u\in U\::\:v\in T(u)\}$. If $V$ is a vector space, for multivalued operators $T,S:U\tos V$, their \emph{sum} $T+S:U\tos V$ is the operator defined as $(T+S)(u)=T(u)+S(u)=\{v+w\::\:v\in T(u),\,w\in S(u)\}$.
From now on, we will identify multivalued operators with their graphs, so we will write $(u,v)\in T$ instead of $(u,v)\in \graph(T)$.

Let $X$ be a Banach space and $X^*$, $X^{**}$ be its topological dual and bidual respectively. We will identify $X$ with its image under the canonical injection into $X^{**}$, so in this way $X$ is called \emph{reflexive} if $X=X^{**}$ and \emph{non-reflexive}, otherwise. The \emph{duality product} $\pi:X\times X^*\to\R$ is defined as $\pi(x,x^*)=\inner{x}{x^*}=x^*(x)$. 

We will say that $(x,x^*),(y,y^*)\in X\times X^*$ are \emph{monotonically related} if 
\[
\inner{x-y}{x^*-y^*}\geq 0,
\]
and this will be denoted by $(x,x^*)\sim(y,y^*)$. Analogously, $(x,x^*)$ is \emph{monotonically related} to an operator $T$ if it is monotonically related to every point in $T$, and this will be denoted by $(x,x^*)\sim T$.
A multivalued operator $T:X\tos X^*$ is called \emph{monotone} if every $(x,x^*),(y,y^*)\in T$ are monotonically related, and it is called \emph{maximal monotone} if it is monotone and $T\subset S$ and $S$ monotone implies $T=S$. In the same way, an operator $T:X^*\to X$ is (maximal) monotone if $T^{-1}$ is so too.
We can also consider the \emph{monotone polar} of $T$ defined as
\[
T^{\mu}=\{(x,x^*)\in X\times X^*\::\:(x,x^*)\sim T\}.
\]

Given a function $f:X\to\R\cup\{+\infty\}$, we denote its \emph{effective domain} by $\dom(f)=\{x\in X\::\: f(x)<\infty\}$ and its \emph{epigraph} as $\epi(f)=\{(x,\lambda)\in X\times \R\::\:f(x)\leq \lambda\}$. We respectively call $f$ \emph{proper}, \emph{convex} and \emph{lower semicontinuous}, whenever $\epi(f)$ is a non-empty, convex and closed set in $X\times \R$.
Let $f:X\to\R\cup\{+\infty\}$ be a lower semicontinuous, proper, convex function. The \emph{Fenchel conjugate} $f^*:X^*\to\R\cup\{+\infty\}$ is defined as $f^*(x^*)=\ds\sup_{x\in X}\{\inner{x}{x^*}-f(x)\}$ and is also lower semicontinuous, proper and convex, whenever $f$ is. The \emph{subdifferential} of $f$ is the multivalued operator defined as 
\[
\partial f=\big\{(x,x^*)\::\:f(y)\geq f(x)+\inner{y-x}{x^*},\,\forall\,y\in X\big\}.
\]
In the same way, the \emph{$\eps$-subdifferential} of $f$ is defined as
\[
\partial_{\eps} f=\big\{(x,x^*)\::\:f(y)\geq f(x)+\inner{y-x}{x^*}-\eps,\,\forall\,y\in X\big\}. 
\]
The subdifferential can be related to the Fenchel conjugate, recall the Fenchel-Young inequality,
\[
f(x)+f^*(x^*)\geq \inner{x}{x^*},
\]
where the equality holds if, and only if, $(x,x^*)\in \partial f$. In the same way, $(x,x^*)\in\partial_{\eps}f$ if, and only if, 
\[
\inner{x}{x^*}\leq f(x)+f^*(x^*)\leq\inner{x}{x^*}+\eps.
\]
For instance, if $j:X\to\R$, $j(x)=\frac{1}{2}\|x\|^2$, we have $j^*:X^*\to\R$, $j^*(x)=\frac{1}{2}\|x^*\|^2$ and we will denote $J=\partial\frac{1}{2}\|\cdot\|^2$ and $J_{\eps}=\partial_{\eps}\frac{1}{2}\|\cdot\|^2$. 

Let $T:X\tos X^*$ be a maximal monotone operator, denote $\widehat T,\widetilde T:X^{**}\tos X^*$ as 
\[
\widehat T=T,\qquad\text{and}\qquad \widetilde{T}=\{(x^{**},x^*)\in X^{**}\times X^*\::\:(x^{**},x^*)\sim \widehat T\}.
\]
The following definition was due to Gossez.
\begin{definition}[{\cite{JPGos0}}]\label{def:typeD}
We say that $T$ is of \emph{type (D)}, if for every $(x^{**},x^*)\in \widetilde{T}$, there exists a bounded net $(x_{\alpha},x^*_{\alpha})_{\alpha}\subset T$ such that $(x_{\alpha},x_{\alpha}^*)\to (x^{**},x^*)$ in the weak$^*\times $strong topology of $X^{**}\times X^*$.
 \end{definition}
For instance, subdifferentials of convex, proper and lower semicontinuous functions are of type (D).

Another equivalent class of maximal monotone operators was given by Simons.
\begin{definition}[{\cite{Simons96}}]
We say that $T$ is of \emph{type (NI)} if for all $(x^{**},x^*)\in X^{**}\times X^*$,
\[
\inf_{(y,y^*)\in T}\inner{x^{**}-y}{x^*-y^*}\leq 0.
\]
\end{definition}
Simons proved that type (D) implies type (NI) in \cite{Simons96}, and Marques Alves and Svaiter proved the full equivalency in~\cite{BSMMA-TypeD}.

When $X$ is non-reflexive, if $T$ is a type (D) operator, then $\widehat T$ has a unique maximal monotone extension on $X^{**}\times X^*$ and, in this case, such maximal extension is $\widetilde{T}$. Note that every maximal monotone operator in a reflexive space is trivially of type (D).

A convex, proper and lower semicontinuous function $h:X\times X^*\to\R\cup\{+\infty\}$ is a \emph{representative} of a monotone operator $T:X\tos X^*$ if $h(x,x^*)\geq \inner{x}{x^*}$, for all $(x,x^*)\in X\times X^*$, and $T=\{(x,x^*)\in X\times X^*\::\:h(x,x^*)=\inner{x}{x^*}\}$. If $T$ has a representative, then it is called a \emph{representable} operator. 
The theory of convex representations began with the seminal work of Fitzpatrick~\cite{Fitz}, which was independently rediscovered by Burachik and Svaiter~\cite{BS}, and Martinez-Legaz and Th\'era~\cite{MLThe}. Fitzpatrick defined, for a monotone operator $T:X\tos X^*$, its \emph{Fitzpatrick function} $\varphi_T:X\times X^*\to\R$,
\[
\varphi_T(x,x^*)=\inner{x}{x^*}-\inf_{(y,y^*)\in T}\inner{y-x}{y^*-x^*}.
\]
\begin{fact}[{\cite[Theorem 3.4]{Fitz}}]\label{fact:fitz}
Let $T:X\tos X^*$ be a monotone operator and $(x,x^*)\in T$. Then $\varphi_T(x,x^*)=\inner{x}{x^*}$,  $(x^*,x)\in\partial\varphi_T(x,x^*)$, and  $\varphi_T^*(x^*,x)=\inner{x}{x^*}$.
\end{fact}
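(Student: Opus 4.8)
The plan is to argue directly from the definition of the Fitzpatrick function, after rewriting it in a more workable form. Expanding the bilinear term $\inner{y-x}{y^*-x^*}$ inside the infimum, one obtains
\[
\varphi_T(x,x^*)=\sup_{(y,y^*)\in T}\big[\inner{x}{y^*}+\inner{y}{x^*}-\inner{y}{y^*}\big],
\]
which exhibits $\varphi_T$ as a pointwise supremum of continuous affine functionals on $X\times X^*$; in particular it is convex and lower semicontinuous, and nowhere equal to $-\infty$ since $T\neq\emptyset$, so once the first assertion is in hand it is also proper and $\partial\varphi_T$ makes sense.

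For the first assertion I would use monotonicity of $T$: if $(x,x^*)\in T$ then $\inner{y-x}{y^*-x^*}\ge 0$ for every $(y,y^*)\in T$, so the infimum defining $\varphi_T(x,x^*)$ is nonnegative, while the choice $(y,y^*)=(x,x^*)$ forces it to be $\le 0$; hence it equals $0$ and $\varphi_T(x,x^*)=\inner{x}{x^*}$. For the second assertion, fix an arbitrary $(z,z^*)\in X\times X^*$ and bound the affine supremum below by its value at $(y,y^*)=(x,x^*)\in T$:
\[
\varphi_T(z,z^*)\ \ge\ \inner{z}{x^*}+\inner{x}{z^*}-\inner{x}{x^*}.
\]
Recognising the right-hand side as $\varphi_T(x,x^*)+\Inner{(z,z^*)-(x,x^*)}{(x^*,x)}$ — using $\varphi_T(x,x^*)=\inner{x}{x^*}$ from the first part and the canonical pairing of $X\times X^*$ with $(X\times X^*)^*=X^*\times X^{**}$ — this is exactly the subgradient inequality, so $(x^*,x)\in\partial\varphi_T(x,x^*)$.

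The third assertion then comes for free: since $(x^*,x)\in\partial\varphi_T(x,x^*)$, the Fenchel--Young inequality for $\varphi_T$ holds with equality, giving $\varphi_T(x,x^*)+\varphi_T^*(x^*,x)=\Inner{(x,x^*)}{(x^*,x)}=2\inner{x}{x^*}$, and combining this with $\varphi_T(x,x^*)=\inner{x}{x^*}$ yields $\varphi_T^*(x^*,x)=\inner{x}{x^*}$.

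The argument is essentially bookkeeping, so I do not expect a genuine obstacle; the one point requiring care is the handling of the dualities — remembering that $\partial\varphi_T$ and $\varphi_T^*$ live on $X^*\times X^{**}$ and that the element $(x^*,x)$ is to be read there via the canonical injection $X\hookrightarrow X^{**}$, which is precisely why the pairing $\Inner{(x,x^*)}{(x^*,x)}$ evaluates to $2\inner{x}{x^*}$ rather than $\inner{x}{x^*}$.
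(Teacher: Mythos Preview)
Your argument is correct and is the standard one: rewrite $\varphi_T$ as a supremum of affine functionals, use monotonicity and the choice $(y,y^*)=(x,x^*)$ to pin down $\varphi_T(x,x^*)=\inner{x}{x^*}$, read off the subgradient inequality from the same lower bound, and then invoke the Fenchel--Young equality. The duality bookkeeping you flag is handled correctly.

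There is nothing to compare against in the paper itself: this statement is recorded as a Fact with a citation to \cite[Theorem~3.4]{Fitz} and is not proved in the paper. Your proof is essentially Fitzpatrick's original argument, so it is consistent with the cited source.
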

Fitzpatrick also proved that maximal monotone operators are representable by $\varphi_T$. The reader can find more information about convex representations in~\cite{Penot04,BSML,Voisei06}, and the references therein.
\smallskip 

A representative function $h:X\times X^*\to\R\cup\{+\infty\}$ of a monotone operator $T$ is called a \emph{strong representative} of $T$ if $h^*(x^*,x^{**})\geq \inner{x^*}{x^{**}}$, for all $(x^*,x^{**})\in X^*\times X^{**}$.  The following fact was proven by Marques Alves and Svaiter.%, which also generalizes Fact~\ref{fact:rock70}.
\begin{fact}[{\cite[Theorem 3.6]{BM2011} and \cite[Theorem 4.4]{BSMMA-TypeD}}]\label{fact:typedeq}
For a maximal monotone operator $T:X\tos X^*$, the following conditions are equivalent:
\begin{enumerate}
\item $T$ is of type (D);
\item $T$ has a strong representative;
\item $\range(T+J_{\eps}(\cdot-x_0))=X^*$, for all $x_0\in X$ and $\eps>0$. 
\end{enumerate}
\end{fact}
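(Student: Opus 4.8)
\noindent The plan is to prove the cycle $(1)\Rightarrow(2)\Rightarrow(3)\Rightarrow(1)$, taking as a black box the already‑quoted equivalence ``type~(D)$\,\Leftrightarrow\,$type~(NI)''. Throughout, $\varphi_T$ denotes the Fitzpatrick function, which by Fitzpatrick's theorem is a representative of the maximal monotone operator $T$ --- indeed the smallest one, $h\ge\varphi_T$ for every representative $h$ --- and I keep the abbreviation $\pi(x,x^*)=\inner{x}{x^*}$.

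\smallskip
\noindent\emph{Step 1: $(1)\Leftrightarrow(2)$.} Expanding $\inner{x^{**}-y}{x^*-y^*}$ and passing to the infimum over $(y,y^*)\in T$, one checks that $T$ is of type~(NI) if and only if the ``extended Fitzpatrick function''
\[
E_T(x^{**},x^*):=\sup_{(y,y^*)\in T}\big[\,x^{**}(y^*)+\inner{y}{x^*}-\inner{y}{y^*}\,\big]
\]
satisfies $E_T(x^{**},x^*)\ge\inner{x^*}{x^{**}}$ for every $(x^{**},x^*)\in X^{**}\times X^*$. Restricting the supremum that defines $\varphi_T^*(x^*,x^{**})$ to $\graph T$, on which $\varphi_T=\pi$ by Fact~\ref{fact:fitz}, yields $\varphi_T^*(x^*,x^{**})\ge E_T(x^{**},x^*)$; hence type~(NI) forces $\varphi_T^*(x^*,x^{**})\ge\inner{x^*}{x^{**}}$, i.e.\ $\varphi_T$ is a strong representative, which is~(2). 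Conversely, if $h$ is any strong representative then $h\ge\varphi_T$ by minimality, so $\varphi_T^*\ge h^*\ge\inner{\cdot}{\cdot}$ and $\varphi_T$ is itself a strong representative; the genuinely subtle point is that ``$\varphi_T$ strong'' already implies type~(NI). I would argue by contradiction: for maximal monotone $T$ the conjugate $\varphi_T^*$ is the closed convex hull --- in the duality of $X^*\times X^{**}$ with $X\times X^*$ --- of the function equal to the pairing on the image of $\graph T$ in $X^*\times X^{**}$ and $+\infty$ elsewhere, so a pair at which type~(NI) fails produces, via a Hahn--Banach separation in that duality, a slope at which $\varphi_T^*$ drops strictly below $\inner{\cdot}{\cdot}$, contradicting~(2). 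With ``type~(D)$\,\Leftrightarrow\,$type~(NI)'' this settles $(1)\Leftrightarrow(2)$.

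\smallskip
\noindent\emph{Step 2: $(2)\Rightarrow(3)$.} Fix $x_0\in X$, $\eps>0$ and $v^*\in X^*$; we must find $(x,x^*)\in T$ with $v^*-x^*\in J_\eps(x-x_0)$. Since $J_\eps=\partial_\eps j$ with $j=\tfrac12\norm{\cdot}^2$, and since both $h-\pi\ge 0$ (equality exactly on $T$) and $Q(x,x^*):=\tfrac12\norm{x-x_0}^2+\tfrac12\norm{x^*-v^*}^2+\inner{x-x_0}{x^*-v^*}\ge 0$ always, consider the convex function
\[
G(x,x^*)=h(x,x^*)+\tfrac12\norm{x-x_0}^2-\inner{x}{v^*}+\tfrac12\norm{x^*-v^*}^2-\inner{x_0}{x^*}+\inner{x_0}{v^*},
\]
which satisfies $G=(h-\pi)+Q\ge 0$ and $G=Q$ on $T$. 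As $G$ is the representative $h$ plus an everywhere finite, continuous, separable convex function, Fenchel--Rockafellar duality applies with no qualification condition; computing the conjugates of the two quadratic‑plus‑linear blocks and using $h^*(x^*,x^{**})\ge\inner{x^*}{x^{**}}$ bounds the dual optimal value above by $\sup_{a\in X^*,\,b\in X^{**}}\big[-b(a)-\tfrac12\norm{a}^2-\tfrac12\norm{b}^2\big]=0$, whence $\inf G=0$. Thus for each $\delta>0$ there is $(x_\delta,x^*_\delta)$ with $h(x_\delta,x^*_\delta)-\inner{x_\delta}{x^*_\delta}<\delta$ and $Q(x_\delta,x^*_\delta)<\delta$: the first says $(x_\delta,x^*_\delta)$ lies in the $\delta$‑enlargement of $T$, the second that $v^*-x^*_\delta\in J_\delta(x_\delta-x_0)$. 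A Br{\o}ndsted--Rockafellar estimate for maximal monotone operators valid in general Banach spaces (after a routine regularisation of $G$ ensuring the relevant minimising sequences stay bounded) then replaces $(x_\delta,x^*_\delta)$ by a genuine $(\ovl{x},\ovl{x}^*)\in T$ within $O(\sqrt\delta)$; taking $\delta$ small enough gives $v^*-\ovl{x}^*\in J_\eps(\ovl{x}-x_0)$, i.e.\ $v^*\in\range(T+J_\eps(\cdot-x_0))$. The passage to the $\eps$‑enlargement is essential here: the auxiliary problem is not coercive, its infimum is not attained, and exact surjectivity of $T+J(\cdot-x_0)$ cannot be expected in the non‑reflexive case.

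\smallskip
\noindent\emph{Step 3: $(3)\Rightarrow(1)$ --- the main obstacle.} I would route this through $(3)\Rightarrow(2)$. The naive attempt is a Minty--Rockafellar argument: given $(x^{**},x^*)\in\widetilde T$, use Goldstine's theorem to pick a bounded net $x_0^\gamma\in X$ with $x_0^\gamma\to x^{**}$ weak$^*$, apply~(3) for each $\gamma$ and each $\eps>0$ to obtain $(x_{\gamma,\eps},y^*_{\gamma,\eps})\in T$ with $x^*-y^*_{\gamma,\eps}\in J_\eps(x_{\gamma,\eps}-x_0^\gamma)$, and combine the inequality $\tfrac12\norm{x_{\gamma,\eps}-x_0^\gamma}^2+\tfrac12\norm{x^*-y^*_{\gamma,\eps}}^2\le\inner{x_{\gamma,\eps}-x_0^\gamma}{x^*-y^*_{\gamma,\eps}}+\eps$ with $(x^{**},x^*)\sim T$. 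This already forces the net to be bounded, but it only yields $\norm{x^*-y^*_{\gamma,\eps}}\lesssim 2\norm{x^{**}-x_0^\gamma}$, which need not tend to $0$ since weak$^*$ convergence is not norm convergence --- so the dual component of the net fails to converge strongly, which is exactly what type~(D) demands. Surmounting this is the technical heart of \cite{BM2011,BSMMA-TypeD}: one must extract, from a surjectivity condition that only sees $X$ and $X^*$, the bidual information encoded in $\varphi_T^*\ge\inner{\cdot}{\cdot}$, and this is done by a delicate limiting argument built on Goldstine density together with the uniform quadratic estimates above. Combined with Step~1 this closes the cycle; I expect this last implication to be by far the hardest.
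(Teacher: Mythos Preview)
The paper does not prove this statement at all: it is recorded as a \emph{Fact}, with citations to \cite[Theorem~3.6]{BM2011} and \cite[Theorem~4.4]{BSMMA-TypeD}, and is used throughout the paper as a black box. There is therefore no ``paper's own proof'' to compare your proposal against.

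As for the proposal itself, it is a plausible high-level outline of how the cited references proceed, but it contains genuine gaps at the two places you yourself flag. In Step~1, the implication $(2)\Rightarrow\text{(NI)}$ is not established: the claim that ``a pair at which type~(NI) fails produces, via a Hahn--Banach separation, a slope at which $\varphi_T^*$ drops strictly below $\inner{\cdot}{\cdot}$'' is an assertion, not an argument, and the description of $\varphi_T^*$ as a closed convex hull in the $X^*\times X^{**}$ duality needs care (the biconjugate of $\pi+\iota_T$ lives on $X\times X^*$, not on $X^*\times X^{**}$). In Step~3 you correctly identify the obstacle --- weak$^*$ convergence of the $x_0^\gamma$ does not give norm convergence of the dual components --- but you do not overcome it; invoking ``a delicate limiting argument built on Goldstine density together with the uniform quadratic estimates'' is a description of what is needed, not a proof. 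Step~2 is closer to complete, though the appeal to ``a Br{\o}ndsted--Rockafellar estimate for maximal monotone operators valid in general Banach spaces'' again defers the actual work to an unstated lemma. Since the paper treats the whole fact as imported, none of this affects the paper; but if you intend this as a standalone proof you would need to fill in precisely the steps you have labelled as obstacles.
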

\smallskip

It is well known the following result due to Rockafellar.
\begin{fact}[{\cite{Rock70}}]\label{fact:rock70}
Let $X$ be reflexive and $T:X\tos X^*$ monotone. Then $T$ is maximal if, and only if, $\range(T+J)=X^*$. 
\end{fact}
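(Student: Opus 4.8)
This is the classical Minty--Rockafellar surjectivity theorem, so I would simply reprove it, but using the convex-representation tools already introduced so that the argument stays close to the paper's viewpoint.

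\emph{The ``only if'' direction} is elementary. Assume $T$ monotone with $\range(T+J)=X^*$, and let $(x_0,x_0^*)\sim T$; fix $z_0^*\in Jx_0$ (nonempty by Hahn--Banach) and use surjectivity to pick $(x_1,x_1^*)\in T$ and $z_1^*\in Jx_1$ with $x_1^*+z_1^*=x_0^*+z_0^*$. Since $(x_0,x_0^*)\sim(x_1,x_1^*)$ and $x_1^*-x_0^*=z_0^*-z_1^*$ we get $\inner{x_1-x_0}{z_1^*-z_0^*}\le 0$, while monotonicity of $J$ gives the reverse inequality, so the product is $0$; expanding and using $\inner{x_i}{z_i^*}=\|x_i\|^2=\|z_i^*\|^2$ forces $\|x_0\|=\|x_1\|$ and $z_0^*\in Jx_1$, and after passing, if necessary, to an equivalent strictly convex renorming with strictly convex dual (under which $J$ is single-valued and injective) one gets $x_0=x_1$, hence $x_0^*=x_1^*$, i.e.\ $(x_0,x_0^*)\in T$.

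\emph{The ``if'' direction} is the substance. Let $T$ be maximal monotone; replacing $T$ by $T-w^*$ (still maximal monotone) it suffices to produce $\bar x$ with $0\in T\bar x+J\bar x$. I would obtain it by minimizing over $X\times X^*$ the convex, proper, lower semicontinuous function
\[
g(x,x^*)=\varphi_T(x,x^*)+\tfrac12\|x\|^2+\tfrac12\|x^*\|^2 .
\]
Three points: (i) $g\ge 0$, since $\varphi_T(x,x^*)\ge\inner{x}{x^*}\ge -\|x\|\,\|x^*\|\ge -\tfrac12\|x\|^2-\tfrac12\|x^*\|^2$; (ii) $g$ attains its infimum: from $\varphi_T(x,x^*)=\sup_{(y,y^*)\in T}[\inner{y}{x^*}+\inner{x}{y^*}-\inner{y}{y^*}]$ and a fixed $(y_0,y_0^*)\in T$, $\varphi_T$ has an affine minorant, so the quadratic terms make $g$ coercive, and a coercive proper convex lower semicontinuous function on the reflexive space $X\times X^*$ attains its infimum, say at $(\bar x,\bar x^*)$; (iii) $\inf g=0$ (discussed below). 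Granting these, write $g$ as the sum of the two nonnegative functions $\varphi_T(\cdot,\cdot)-\pi$ and $\tfrac12\|\cdot\|^2+\tfrac12\|\cdot\|^2+\pi$; since $g(\bar x,\bar x^*)=0$ both vanish there, i.e.\ $\varphi_T(\bar x,\bar x^*)=\inner{\bar x}{\bar x^*}$ — whence $(\bar x,\bar x^*)\in T$ by Fitzpatrick's representation theorem for maximal monotone operators — and $\tfrac12\|\bar x\|^2+\tfrac12\|\bar x^*\|^2=-\inner{\bar x}{\bar x^*}$, which is the Fenchel--Young equality for $\tfrac12\|\cdot\|^2$, so $-\bar x^*\in J\bar x$. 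Thus $0\in T\bar x+J\bar x$.

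\emph{The main obstacle} is step (iii). I would derive it from Fenchel duality applied to $g=\varphi_T+q$, with $q(x,x^*)=\tfrac12\|x\|^2+\tfrac12\|x^*\|^2$: since $q$ is finite and continuous on all of $X\times X^*$ there is no duality gap, and using $X^{**}=X$ together with $(\tfrac12\|\cdot\|^2)^*=\tfrac12\|\cdot\|^2$ one gets
\[
\inf g=-\inf_{(x^*,x)\in X^*\times X}\Big[\varphi_T^*(x^*,x)+\tfrac12\|x^*\|^2+\tfrac12\|x\|^2\Big].
\]
The point is that $\varphi_T^*(x^*,x)\ge\inner{x}{x^*}$ for all $(x^*,x)$ — equivalently, that $\varphi_T$ is a \emph{strong} representative — which holds because the conjugate-transpose of $\varphi_T$ coincides with the closed convex hull of the function equal to the duality product on $\graph(T)$ and $+\infty$ elsewhere, and the latter lies above the continuous function $\pi$ thanks to monotonicity of $T$. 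The bracketed expression is then $\ge\inner{x}{x^*}+\tfrac12\|x^*\|^2+\tfrac12\|x\|^2\ge 0$, so $\inf g\le 0$, and with (i), $\inf g=0$. An alternative, more classical route for this direction is to renorm (Asplund--Troyanski) so that $J$ becomes a bijective homeomorphism and then run Rockafellar's original perturbation/Debrunner--Flor argument; that trades the convex-analytic input for a topological one but is no shorter.
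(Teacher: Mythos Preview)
The paper does not prove this statement at all: it is recorded as a \emph{Fact} with a bare citation to Rockafellar~\cite{Rock70}, and the text immediately moves on. So there is no ``paper's own proof'' to compare against; your proposal is simply a self-contained reproof of a classical theorem that the authors take for granted.

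On the content of your proposal: the substantive direction (maximal monotone $\Rightarrow$ $\range(T+J)=X^*$) via minimizing $\varphi_T+\tfrac12\|\cdot\|^2+\tfrac12\|\cdot\|^2$ and Fenchel duality is correct --- this is essentially the Simons--Z\u{a}linescu argument, and your justification of $\varphi_T^{*}(x^*,x)\ge\inner{x}{x^*}$ through the closed convex hull of $\pi+\delta_{\graph T}$ is fine. Two small remarks: you have the labels reversed (in ``$T$ maximal $\iff$ surjective'', the \emph{only if} part is maximal $\Rightarrow$ surjective, which you call the ``if'' direction), and coercivity of $g$ should be argued on $X\times X^*$ jointly, which your affine minorant does give.

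There is, however, a genuine gap in your ``elementary'' direction (surjectivity $\Rightarrow$ maximality). After deducing $\|x_0\|=\|x_1\|$ and $z_0^*\in Jx_1$, you write ``after passing, if necessary, to an equivalent strictly convex renorming\dots''. You cannot do this: the hypothesis $\range(T+J)=X^*$ is about the duality mapping $J$ of the \emph{given} norm, and renorming replaces $J$ by a different operator, so the surjectivity assumption is lost. Without some strict-convexity hypothesis on the norm, the equalities you obtain ($z_0^*\in Jx_0\cap Jx_1$, $z_1^*\in Jx_0\cap Jx_1$) do not force $x_0=x_1$. Rockafellar's original 1970 argument handles this by renorming \emph{first} (via Asplund) and then proving both directions for the new $J$; the paper's one-line statement is a customary abuse that silently allows this. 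If you want the implication exactly as stated for an arbitrary norm on a reflexive space, you need a different argument or an extra hypothesis.
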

As a consequence of this result, when $X$ is reflexive with a strictly convex norm and strictly convex dual, given a maximal monotone operator $T:X\tos X^*$ and $\lambda>0$, the inclusion
\[
0\in \lambda T(\cdot)+J(\cdot-x)
\]
has a solution $R_{\lambda}(x)$, which is also unique. Thus, the \emph{resolvent} of $T$ is the operator $R_{\lambda}:X\to X$ and the \emph{Moreau-Yosida regularization} of $T$ is the operator $T_{\lambda}:X\to X^*$, defined as $T_{\lambda}(x)=\dfrac{1}{\lambda}J(x-R_{\lambda}(x))$.
These operators were studied by Moreau and Yosida in Hilbert spaces, and generalized to reflexive spaces by Brezis \emph{et al.}~\cite{BCP70}. Recently, Marques Alves and Svaiter~\cite{MAS11} extended these definitions to general Banach spaces, for maximal monotone operators of type (D). For $X$ any Banach space and given $T:X\tos X^*$ a type (D) operator and $\lambda>0$, we consider the solutions $(x^*,z^{**})$ of the system
\begin{equation}\label{eq:myreg}
x^*\in \widetilde{T}(z^{**}),\qquad 0\in \lambda x^*+\widetilde J(z^{**}-x).
\end{equation}
Thus, the new versions of $R_{\lambda}$ and $T_{\lambda}$ are 
\begin{gather*}
R_{\lambda}:X\tos X^{**},\quad \graph(R_{\lambda})=\left\{(x,z^{**})\::\:
\begin{array}{l}
\exists\,x^*\in X^*\text{ such that}\\
(x^*,z^{**})\text{ is solution of~\eqref{eq:myreg}}
\end{array}\right\}\\
T_{\lambda}:X\tos X^{*},\quad \graph(T_{\lambda})=\left\{(x,x^*)\::\:
\begin{array}{l}
\exists\,z^{**}\in X^{**}\text{ such that}\\
(x^*,z^{**})\text{ is solution of~\eqref{eq:myreg}}
\end{array}\right\}
\end{gather*}
Some properties of $T_{\lambda}$ are the following.
\begin{fact}[{\cite[Theorem 4.6]{MAS11}}]\label{fact:myreg} Let $X$ be a Banach space, $\lambda>0$ and let $T:X\tos X$ be a maximal monotone operator of type (D). Then the following holds
\begin{enumerate}
 \item $T_{\lambda}$ is maximal monotone of type (D),
 \item $\Dom(T_{\lambda})=X$,
 \item $T_{\lambda}$ maps bounded sets into bounded sets	.
\end{enumerate}
\end{fact}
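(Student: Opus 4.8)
The plan is to pivot everything on a single existence statement: that the system~\eqref{eq:myreg} is solvable for every $x\in X$. Granting that, item~2 is immediate; item~1 comes from the classical resolvent bookkeeping, carried out on the pair $(X^{**},X^*)$ in place of a reflexive space; and item~3 follows from a priori estimates read directly off~\eqref{eq:myreg}. So I would organize the proof as: (i) solve~\eqref{eq:myreg}; (ii) check monotonicity of $T_\lambda$; (iii) deduce maximality and type~(D); (iv) derive the bound.

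For (i), fix $x\in X$ and seek $z^{**}\in X^{**}$, $x^*\in X^*$ with $x^*\in\widetilde T(z^{**})$ and $-\lambda x^*\in\widetilde J(z^{**}-x)$, i.e. $0\in\lambda\widetilde T(z^{**})+\widetilde J(z^{**}-x)$. Since $T$ is of type~(D), $\widetilde T$ is its unique maximal monotone extension to $X^{**}\times X^*$ and behaves ``reflexive-like'': the point I would establish is the bidual Rockafellar-type surjectivity $0\in\range\big(\lambda\widetilde T+\widetilde J(\cdot-x)\big)$, the analogue of Fact~\ref{fact:typedeq}(3) in which weak$^*$ compactness of closed balls of $X^{**}$ replaces reflexivity. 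I would prove it by a Fenchel--Rockafellar/Fitzpatrick minimax argument on $X^{**}\times X^*$, pairing a strong representative of $T$ (Fact~\ref{fact:typedeq}(2)) with the representative $(z^{**},x^*)\mapsto\frac1{2\lambda}\norm{z^{**}-x}^2+\frac\lambda2\norm{x^*}^2$ of $\widetilde J(\cdot-x)$, the needed infimum/supremum being attained precisely because of that compactness. This is, in essence, the content of~\cite{MAS11}, and it gives $\Dom(T_\lambda)=X$.

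For (ii), take $(x_1,x_1^*),(x_2,x_2^*)\in T_\lambda$ with witnesses $z_1^{**},z_2^{**}$ as in~\eqref{eq:myreg}: monotonicity of $\widetilde T$ gives $\inner{z_1^{**}-z_2^{**}}{x_1^*-x_2^*}\ge 0$, monotonicity of $\widetilde J$ applied to $-\lambda x_i^*\in\widetilde J(z_i^{**}-x_i)$ gives $\inner{(z_1^{**}-z_2^{**})-(x_1-x_2)}{x_1^*-x_2^*}\le 0$, and subtracting yields $\inner{x_1-x_2}{x_1^*-x_2^*}\ge 0$. For (iii), I would first note that $T_\lambda$ has closed graph because it is representable: a representative is produced from the minimax of step~(i) by a partial inf-convolution in the bidual variable (equivalently, one checks that inf-convolving a strong representative of $\widetilde T$ with the above representative of $\widetilde J(\cdot-x)$ yields a strong representative of $T_\lambda$, cf.~\cite{BM2011}). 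Then, as in the reflexive case, maximality follows from monotonicity together with $\range\big(T_\lambda+J_\eps(\cdot-x_0)\big)=X^*$ for all $x_0\in X$, $\eps>0$; this last surjectivity I would reduce, by eliminating $z^{**}$ from~\eqref{eq:myreg} using the positive homogeneity of $\widetilde J$ and $\widetilde J^{-1}$, to the type-(D) surjectivity of $T$ in Fact~\ref{fact:typedeq}(3). Representability plus this surjectivity force $T_\lambda$ to be maximal monotone, and then Fact~\ref{fact:typedeq} gives type~(D) (alternatively, the strong representative above gives type~(D) directly once maximality is in hand).

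For (iv), let $(x,x^*)\in T_\lambda$ with witness $z^{**}$. From $-\lambda x^*\in\widetilde J(z^{**}-x)$ I would use the duality-mapping identities $\norm{z^{**}-x}=\lambda\norm{x^*}$ and $\inner{z^{**}-x}{x^*}=-\lambda\norm{x^*}^2$; fixing $(u_0,u_0^*)\in T\subset\widetilde T$, monotonicity of $\widetilde T$ at $(z^{**},x^*)$ and $(u_0,u_0^*)$ gives $\inner{z^{**}-u_0}{x^*-u_0^*}\ge 0$, and substituting $z^{**}=(z^{**}-x)+x$ together with the two identities turns this into a quadratic inequality in $\norm{x^*}$ with coefficients controlled by $\lambda,\norm{x},\norm{u_0},\norm{u_0^*}$; solving it bounds $\norm{x^*}$, and hence $\norm{z^{**}}$, affinely in $\norm{x}$, which is item~3. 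I expect the main obstacle to be the bidual surjectivity in step~(i) (and, relatedly, the elimination reducing the $T_\lambda$-surjectivity to Fact~\ref{fact:typedeq}(3) in step~(iii)): these are the only places where the hypothesis ``type~(D)'' is used essentially, and they are what let the reflexive-space arguments of~\cite{Rock70,BCP70} be transplanted to $X^{**}\times X^*$, with weak$^*$ compactness substituting for reflexivity; everything else is routine resolvent manipulation.
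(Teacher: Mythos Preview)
The paper does not prove this statement: it is recorded as a \emph{Fact} and cited verbatim from \cite[Theorem~4.6]{MAS11}, so there is no ``paper's own proof'' to compare your proposal against. Your sketch is a reasonable outline of how the argument in \cite{MAS11} actually proceeds (solve the bidual inclusion using a strong representative and weak$^*$ compactness, then read off monotonicity, maximality/type~(D), and the a~priori bound from the resolvent identities), but within the present paper the result is simply imported, not reproved.
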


\section{Sequential lower limits in general Banach spaces}
Let $X$ be a Banach space and let $\{T_n:X\tos X^*\}_{n\in\N}$ be a sequence of maximal monotone operators.
The \emph{sequential lower limit} (with respect to the strong$\times $strong topology) of $\{T_n\}_n$ is
\[
T=\liminf T_n=\big\{(x,x^*)\::\:\exists\,(y_n,y_n^*)\in T_n,\text{ s.t. }\lim_n(y_n,y_n^*)=(x,x^*)\big\},
\]
where the convergence is taken in the strong$\times $strong topology of $X\times X^*$.
Garc\'ia and Lassonde~\cite{GarciaLassonde2012}, in reflexive Banach spaces, proved the following characterization of $T$: $(x,x^*)\in T$ if, and only if, $x=\lim x_n$, where $x_n$ is the unique solution of
\[
x^*\in T_n(x_n)+J(x_n-x).
\]
Note that Fact~\ref{fact:rock70} is central for $(x_n)_n$ to be well defined.

Our aim is to recover this result for a sequence of type (D) operators in any Banach space, so from %
now on, $X$ will be any real Banach space.

Let $\{T_n:X\tos X^*\}_{n\in\N}$ be a sequence of maximal monotone operators of type (D) and let $T:X\tos X^*$ be its lower limit.
For every $T_n$, consider $\widetilde{T}_n:X^{**}\tos X^*$ be its unique extension to the bidual, which exists as $T_n$ is of type (D). Thus, the sequence $\{\widetilde T_n\}_n$ also has its lower limit, say $S:X^{**}\tos X^{*}$. 
Note that $T_n\subset \widetilde{T}_n$, since $T_n$ is monotone, and this implies $T\subset S$, considering $X\subset X^{**}$.

Consider ${(\eps_n)}_n$ to be any sequence of positive numbers approaching to 0. Given $(x,x^*)\in X\times X^*$ and $n\in\N$, by Fact~\ref{fact:typedeq}, the inclusion
\begin{equation}\label{eq:eq1}
x^*\in T_n(x_n)+J_{\eps_n}(x_n-x),
\end{equation}
has a solution $x_n\in X$. Thus, there exist $x_n^*\in T_n(x_n)$ and $w_n^*\in J_{\eps_n}(x_n-x)$ such that $x^*=x_n^*+w_n^*$.

\begin{lemma}\label{lem:tech}
For any $(x,x^*)\in X\times X^*$, let $(x_n)_n$ be the sequence of solutions of the inclusion~\eqref{eq:eq1} and let ${(x_n^*)}_n,{(w_n^*)}_n\subset X^*$ such that $x^*=x_n^*+w_	n^*$, $x_n^*\in T_n(x_n)$ and $w_n^*\in J_{\eps_n}(x_n-x)$.
Then $(x_n)_n$, ${({x^*_n})}_n$ and ${({w^*_n})}_n$ are bounded. Moreover, given any weak$^*$-limit points $\bar x^{**}$ and $\bar w^*$ of $(x_n)_n$ and ${(w_n^*)}_n$, respectively, there exist subnets ${(x_{n_\alpha})}_{\alpha}$ and ${(w^*_{n_\alpha})}_{\alpha}$ of ${(x_n)}_n$ and ${(w^*_n)}_n$, respectively, such that
\begin{equation}
\inner{u^{**}-\bar x^{**}}{u^*-(x^*-\bar w^*)}\geq \dfrac{1}{2}\lim_{\alpha}\|x_{n_{\alpha}}-x\|^2+\dfrac{1}{2}\lim_{\alpha}\|w^*_{n_{\alpha}}\|^2+\inner{x-\bar x^{**}}{\bar w^*},\label{eq:forfitz}
\end{equation}
for all $(u^{**},u^*)\in S$. In particular, $(\bar x^{**},x^*-\bar w^*)\sim S$.
\end{lemma}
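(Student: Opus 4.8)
The plan is to exploit separately the two mechanisms hidden in~\eqref{eq:eq1}: the monotonicity of $T_n$ and of its bidual extension $\widetilde T_n$, and the defining inequality of $J_{\eps_n}=\partial_{\eps_n}\tfrac12\normq{\cdot}$. From $w_n^*\in J_{\eps_n}(x_n-x)$ and the characterization of $\partial_{\eps}$ recalled above one gets the single estimate $\tfrac12\normq{x_n-x}+\tfrac12\normq{w_n^*}\le\inner{x_n-x}{w_n^*}+\eps_n$, which yields at once
\[
\inner{x_n-x}{w_n^*}\ \ge\ \tfrac12\normq{x_n-x}+\tfrac12\normq{w_n^*}-\eps_n
\qquad\text{and}\qquad
\bigl|\,\norm{x_n-x}-\norm{w_n^*}\,\bigr|\le\sqrt{2\eps_n}.
\]
By the second inequality and $x_n^*=x^*-w_n^*$, the sequences $(w_n^*)_n$ and $(x_n^*)_n$ are bounded as soon as $(x_n)_n$ is, so boundedness reduces to controlling $\norm{x_n-x}$. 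For that, fix a point $(a,a^*)$ in the lower limit $T$ (so there are $(a_n,a_n^*)\in T_n$ converging to it, hence bounded by some $M$); monotonicity of $T_n$ between $(x_n,x_n^*)$ and $(a_n,a_n^*)$ gives $\inner{x_n}{x_n^*}\ge-M\norm{x_n}-M\norm{x_n^*}-M^2$, which, fed into $\inner{x_n-x}{w_n^*}=\inner{x_n-x}{x^*}-\inner{x_n}{x_n^*}+\inner{x}{x_n^*}$ together with the first estimate and $\norm{x_n}\le\norm{x_n-x}+\norm{x}$, $\norm{x_n^*}\le\norm{x_n-x}+\norm{x^*}+\sqrt{2\eps_n}$, produces $\tfrac12\normq{x_n-x}\le C\norm{x_n-x}+C'$; hence all three sequences are bounded.

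For~\eqref{eq:forfitz}, by Banach--Alaoglu the bounded sequences $(x_n)_n\subset X^{**}$ and $(w_n^*)_n\subset X^*$ have weak$^*$ cluster points; passing to a common subnet $(n_\alpha)$ I would arrange $x_{n_\alpha}\to\bar x^{**}$ in $\sigma(X^{**},X^*)$, $w^*_{n_\alpha}\to\bar w^*$ in $\sigma(X^*,X)$ (so $x^*_{n_\alpha}\to x^*-\bar w^*$), and $\normq{x_{n_\alpha}-x}\to\ell_1$, $\normq{w^*_{n_\alpha}}\to\ell_2$ (with $\ell_1=\ell_2$, by the estimate above). Fix $(u^{**},u^*)\in S$ and choose $(u_n,u_n^*)\in\widetilde T_n$ with $(u_n,u_n^*)\to(u^{**},u^*)$ strongly. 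Since $(x_n,x_n^*)\in T_n\subset\widetilde T_n$, monotonicity of $\widetilde T_n$ gives $\inner{x_n-u_n}{x_n^*-u_n^*}\ge0$ in the $X^{**}\times X^*$ pairing; substituting $x_n^*=x^*-w_n^*$, expanding, and using the first estimate to bound the term $\inner{x_n-x}{w_n^*}$, one rearranges into
\[
\tfrac12\normq{x_n-x}+\tfrac12\normq{w_n^*}\ \le\ \eps_n+\inner{u_n}{w_n^*}+A_\alpha ,
\]
where $A_\alpha$ is a finite signed sum of the pairings $\inner{x_n}{x^*},\ \inner{x}{w_n^*},\ \inner{x_n}{u_n^*},\ \inner{u_n}{x^*},\ \inner{u_n}{u_n^*}$, all of which converge along the subnet (weak$^*\times$strong and strong$\times$strong continuity of the pairing). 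Taking $\limsup_\alpha$ and collecting limits gives precisely~\eqref{eq:forfitz} --- \emph{provided} $\limsup_\alpha\inner{u_n}{w_n^*}\le\inner{u^{**}}{\bar w^*}$.

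The main obstacle is exactly this last step. Since $u_n\to u^{**}$ in norm and $(w_n^*)_n$ is bounded, $\inner{u_n}{w_n^*}-\inner{u^{**}}{w_n^*}\to0$, so one must estimate $\limsup_\alpha\inner{u^{**}}{w^*_{n_\alpha}}$ by $\inner{u^{**}}{\bar w^*}$ --- which is not automatic, since $w^*_{n_\alpha}\to\bar w^*$ only in $\sigma(X^*,X)$ while $u^{**}\in X^{**}$ need not be $\sigma(X^*,X)$-continuous. I would handle this by refining the subnet (a Tychonoff argument on the bounded sequence $(w_n^*)_n$) so that $\inner{u^{**}}{w^*_{n_\alpha}}$ converges for every $u^{**}$ in the bidual part of $S$, and then identify the limit with $\inner{u^{**}}{\bar w^*}$ using the type (D) description of $\widetilde T_n$ by weak$^*$-convergent bounded nets from $T_n$; I expect this to be the crux of the proof. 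Granting~\eqref{eq:forfitz}, the last assertion is immediate: weak$^*$-lower semicontinuity of the norms of $X^{**}$ and $X^*$ gives $\norm{\bar x^{**}-x}\le\ell_1^{1/2}$ and $\norm{\bar w^*}\le\ell_2^{1/2}$, so by Cauchy--Schwarz and the arithmetic--geometric mean inequality $\inner{x-\bar x^{**}}{\bar w^*}\ge-\tfrac12\ell_1-\tfrac12\ell_2$; hence the right-hand side of~\eqref{eq:forfitz} is $\ge 0$, so $\inner{u^{**}-\bar x^{**}}{u^*-(x^*-\bar w^*)}\ge 0$ for all $(u^{**},u^*)\in S$, i.e.\ $(\bar x^{**},x^*-\bar w^*)\sim S$.
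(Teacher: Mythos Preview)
Your overall strategy is exactly the paper's: combine the monotonicity of $\widetilde T_n$ (tested against $(u_n^{**},u_n^*)\in\widetilde T_n$ converging strongly to a fixed $(u^{**},u^*)\in S$) with the $\eps$-subdifferential inequality for $J_{\eps_n}$, obtain the key estimate
\[
\inner{u_n^{**}-x_n}{u_n^{*}-x^*}\ \geq\ \tfrac12\normq{x_n-x}+\tfrac12\normq{w_n^*}+\inner{x-u_n^{**}}{w_n^*}-\eps_n,
\]
and then pass to a weak$^*$-convergent subnet. For boundedness the paper stays with this same inequality and simply absorbs the cross term via $\inner{u_n^{**}-x}{w_n^*}\le\tfrac12\normq{u_n^{**}-x}+\tfrac12\normq{w_n^*}$, which is a bit more economical than your route through a separate point of $T$; your version also tacitly assumes $T\neq\emptyset$, which the paper avoids by working with $S$ from the outset. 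The deduction of $(\bar x^{**},x^*-\bar w^*)\sim S$ from~\eqref{eq:forfitz} via weak$^*$-l.s.c.\ of the norms is the same in both.

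Where you diverge from the paper is precisely at your ``main obstacle''. You are right that $w^*_{n_\alpha}\to\bar w^*$ only in $\sigma(X^*,X)$, so $\inner{u^{**}}{w^*_{n_\alpha}}\to\inner{u^{**}}{\bar w^*}$ is not automatic for a general $u^{**}\in X^{**}$; but you should know that the paper's proof does \emph{not} supply any extra mechanism here --- it simply ``takes limits'' in the displayed inequality and writes $\inner{x-u^{**}}{\bar w^*}$ on the right without further comment. So compared with the paper you are neither missing an idea nor taking a different route; you are being more scrupulous about a step the paper treats as routine. Your proposed Tychonoff fix, however, does not close the gap as stated: refining the subnet makes $u^{**}\mapsto\lim_\alpha\inner{u^{**}}{w^*_{n_\alpha}}$ a well-defined element of $X^{***}$, but nothing forces it to agree with the canonical image of $\bar w^*$ (that would amount to weak, not weak$^*$, sequential compactness of bounded sets in $X^*$), and the type~(D) description of $\widetilde T_n$ does not provide the missing identification. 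If you want to match the paper, pass to the limit as it does; if you want a fully justified argument, you will need a genuinely different device at this step.
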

\begin{proof}
Let $(u^{**},u^*)\in S$ be fixed and let $(u_n^{**},u_n^*)\in \widetilde T_n$ such that $(u_n^{**},u_n^*)$ converges  strongly to $(u^{**},u^*)$. Using the monotonicity of $\widetilde T_n$ and the facts that $T_n\subset \widetilde T_n$ and $(x_n,x_n^*)\in T_n$, we have
\[
\inner{u_n^{**}-x_n}{u_n^{*}-x_n^*}\geq 0,
\]
and this, together with $x^*=x_n^*+w_n^*$, implies
\begin{equation}\label{eq:eq1.5}
\inner{u_n^{**}-x_n}{u_n^{*}-x^*}\geq\inner{x_n}{w_n^*}-\inner{u_n^{**}}{w_n^*}.
\end{equation}
Moreover, since $w_n^*\in J_{\eps_n}(x_n-x)$, 
\begin{equation}\label{eq:eq2}
\dfrac{1}{2}\|x_n-x\|^2+\dfrac{1}{2}\|w_n^*\|^2\leq 
\inner{x_n}{w_n^*}-\inner{x}{w_n^*}+\eps_n,
\end{equation}
so we replace~\eqref{eq:eq2} on~\eqref{eq:eq1.5} to obtain
\begin{align}
\inner{u_n^{**}-x_n}{u_n^{*}-x^*}&\geq\inner{x_n}{w_n^*}-\inner{u_n^{**}}{w_n^*}\notag\\
				 &\geq \dfrac{1}{2}\|x_n-x\|^2+\dfrac{1}{2}\|w_n^*\|^2+\inner{x-u_n^{**}}{w_n^*}-\eps_n.\label{eq:eq3}
\end{align}
On the other hand, $(\|u_n^{**}-x\|-\|w_n^*\|)^2\geq 0$ implies
\[
\inner{u_n^{**}-x}{w_n^*}%\leq\|u_n^{**}-x\|\|w_n^*\|
\leq\dfrac{1}{2}\|u_n^{**}-x\|^2+\dfrac{1}{2}\|w_n^*\|^2.
\]
This inequality together with~\eqref{eq:eq3} shows that
\begin{align}
\inner{u_n^{**}-x_n}{u_n^{*}-x^*}&\geq \dfrac{1}{2}\|x_n-x\|^2+\dfrac{1}{2}\|w_n^*\|^2+\inner{x-u_n^{**}}{w_n^*}-\eps_n\label{eq:eq3.5}\\
&\geq \dfrac{1}{2}\|x_n-x\|^2-\dfrac{1}{2}\|u_n^{**}-x\|^2-\eps_n.\label{eq:eq4}
\end{align}
The right side in equation~\eqref{eq:eq4} has a quadratic term depending on $(x_n)_n$, which is bounded by a linear one on the left side of~\eqref{eq:eq3.5}.
This, along with the fact that ${(u_n^{**})}_n$ and ${(u_n^{*})}_n$ are bounded, implies that $(x_n)_n$ is bounded. Similarly, rearranging~\eqref{eq:eq3.5}, we obtain that ${(w^*_n)}_n$ is also bounded, and so is ${(x_n^*)}_n$.

Take any weak$^*$-limit points of $(x_n)_n$ and ${(w^*_n)}_n$, say $\bar x^{**}$ and $\bar w^*$, which exist by the Banach-Alaoglu Theorem. Let ${(x_{n_{\alpha}})}_{\alpha}$ and ${(w_{n_{\alpha}}^*)}_{\alpha}$ be subnets such that $x_{n_{\alpha}}\to \bar x^{**}$ and $w_{n_{\alpha}}^*\to \bar w^*$, in the weak$^*$ topology of $X^{**}$ and $X^*$, respectively. Without loss of generality, we also assume that both $(\|x_{n_{\alpha}}-x\|)_{\alpha}$ and $(\|w^*_{n_{\alpha}}\|)_{\alpha}$ are convergent and, since the norm is lower semi-continuous in the weak$^*$ topology,
\begin{equation}\label{eq:normsci}
\lim_{\alpha} \|x_{n_{\alpha}}-x\|\geq \|\bar x^{**}-x\|,\quad\text{and}\quad\lim_{\alpha} \|w_{n_{\alpha}}^*\|\geq \|\bar w^*\|.
\end{equation}
We replace the previously chosen subnets in~\eqref{eq:eq3.5} and take limits, so we obtain
\begin{align*}
\inner{u^{**}-\bar x^{**}}{u^*-x^*}&\geq \dfrac{1}{2}\lim_{\alpha}\|x_{n_{\alpha}}-x\|^2+\dfrac{1}{2}\lim_{\alpha}\|w^*_{n_{\alpha}}\|^2+\inner{x-u^{**}}{\bar w^*},\\
 &\geq \dfrac{1}{2}\|\bar x^{**}-x\|^2+\dfrac{1}{2}\|\bar w^*\|^2+\inner{x-u^{**}}{\bar w^*}.
\end{align*}
Rearranging the last equation, we obtain~\eqref{eq:forfitz}. Now, combining~\eqref{eq:forfitz} and~\eqref{eq:normsci}, for all $(u^{**},u^*)\in S$,
\begin{align*}
\inner{u^{**}-\bar x^{**}}{u^*-(x^*-\bar w^*)}&\geq \dfrac{1}{2}\lim_{\alpha}\|x_{n_{\alpha}}-x\|^2+\dfrac{1}{2}\lim_{\alpha}\|w^*_{n_{\alpha}}\|^2+\inner{x-\bar x^{**}}{\bar w^*}\\
&\geq\dfrac{1}{2}\|\bar x^{**}-x\|^2+\dfrac{1}{2}\|\bar w^*\|^2-\inner{\bar x^{**}-x}{\bar w^*}\geq 0,
\end{align*}
which implies $(\bar x^{**},x^*-\bar w^*)\sim S$.
\end{proof}

So we have the following proposition.
\begin{proposition}\label{prop:main}
Let $X$ be a real Banach space, $\{T_n:X\tos X^*\}_n$ be a sequence of maximal monotone operators of type (D) and let $(\eps_n)_n$ be any sequence of positive numbers convergent to zero. If $T=\liminf T_n$, then $(x,x^*)\in T$ if, and only if, $x=\lim x_n$, where $x_n$ is a solution of
\[
x^*\in T_n(x_n)+J_{\eps_n}(x_n-x).
\]
\end{proposition}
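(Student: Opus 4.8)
The plan is to prove the two implications separately. For the ``if'' part I would argue directly from the $\eps$-subdifferential (Fenchel--Young) inequality, with no recourse to Lemma~\ref{lem:tech}; for the ``only if'' part I would combine the monotonicity of each $T_n$ with that same inequality, invoking Lemma~\ref{lem:tech} only to know that the solution sequence is bounded. Recall that solutions $x_n$ of the inclusion exist by Fact~\ref{fact:typedeq}, as already noted; in both directions I would fix such an $x_n$ and write $x^*=x_n^*+w_n^*$ with $x_n^*\in T_n(x_n)$ and $w_n^*\in J_{\eps_n}(x_n-x)$, so that \eqref{eq:eq2} is available, namely
\[
\tfrac12\norm{x_n-x}^2+\tfrac12\norm{w_n^*}^2\leq\inner{x_n-x}{w_n^*}+\eps_n.
\]

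For the ``if'' implication I would assume $x_n\to x$. Since $\inner{x_n-x}{w_n^*}\leq\norm{x_n-x}\,\norm{w_n^*}$, the inequality above yields $\tfrac12\big(\norm{x_n-x}-\norm{w_n^*}\big)^2\leq\eps_n$, whence $\norm{w_n^*}\leq\norm{x_n-x}+\sqrt{2\eps_n}\to 0$. Therefore $x_n^*=x^*-w_n^*\to x^*$ strongly, so the sequence $(x_n,x_n^*)\in T_n$ converges strongly to $(x,x^*)$ and $(x,x^*)\in\liminf T_n=T$ by the very definition of the sequential lower limit.

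For the ``only if'' implication I would fix $(x,x^*)\in T$ and choose $(y_n,y_n^*)\in T_n$ converging strongly to $(x,x^*)$; by Lemma~\ref{lem:tech} the sequences $(x_n)_n$ and $(w_n^*)_n$ are bounded. Then I would write the monotonicity inequality for the pairs $(x_n,x_n^*),(y_n,y_n^*)\in T_n$, substitute $x_n^*=x^*-w_n^*$, split $\inner{x_n-y_n}{w_n^*}=\inner{x_n-x}{w_n^*}+\inner{x-y_n}{w_n^*}$, and bound the first summand from below by means of \eqref{eq:eq2}; this should give
\[
\tfrac12\norm{x_n-x}^2+\tfrac12\norm{w_n^*}^2\leq\inner{x_n-y_n}{x^*-y_n^*}-\inner{x-y_n}{w_n^*}+\eps_n.
\]
The right-hand side tends to $0$: the first term is dominated by $\norm{x_n-y_n}\,\norm{x^*-y_n^*}$, with $(x_n-y_n)_n$ bounded and $\norm{x^*-y_n^*}\to 0$; the second by $\norm{x-y_n}\,\norm{w_n^*}$, with $(w_n^*)_n$ bounded and $\norm{x-y_n}\to 0$; and $\eps_n\to 0$. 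Hence $\norm{x_n-x}\to 0$, i.e.\ $x=\lim x_n$.

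I expect the only genuine obstacle to be the boundedness of $(x_n)_n$ in the ``only if'' direction, which is precisely what Lemma~\ref{lem:tech} supplies (alternatively it can be read off directly from the last displayed estimate by balancing the quadratic term in $x_n$ against the linear ones, exactly as in the proof of that lemma). The sharper Fitzpatrick-type inequality \eqref{eq:forfitz} and the weak$^*$ limit-point argument of Lemma~\ref{lem:tech} play no role in this proposition; they are tailored to the ensuing representability result.
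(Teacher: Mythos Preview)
Your proof is correct and follows essentially the same route as the paper. The paper's ``only if'' argument specializes inequality~\eqref{eq:eq3.5} from the proof of Lemma~\ref{lem:tech} to the choice $(u^{**},u^*)=(x,x^*)$ with $(u_n^{**},u_n^*)=(y_n,y_n^*)\in T_n$, which yields exactly your displayed estimate; you merely rederive that inequality from scratch instead of quoting it, and your ``if'' direction is identical to the paper's.
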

\begin{proof}
In the proof of Lemma~\ref{lem:tech}, consider the particular case when $(x,x^*)\in T$. Choose $(u^{**},u^*)=(x,x^*)$ so we have $u_n^{**}\to u^{**}=x$ and $u_n^*\to u^*=x^*$, strongly. Thus, taking limits on~\eqref{eq:eq3.5}, since all the sequences involved are bounded, we have $x_n\to x$ and $w_n^*\to 0$, also strongly.
This proves the ``only if'' part. Conversely, if $x_n\to x$, equation~\eqref{eq:eq2} implies that $w_n^*\to 0$. Hence $x_n^*\to x^*$. Since $(x_n,x_n^*)\in T_n$, $(x,x^*)\in T$ and the proposition follows.
\end{proof}

We end this section by proving that the lower limit $T$ is a representable operator.
\begin{theorem}\label{teo:main}
Let $X$ be a real Banach space and $\{T_n:X\tos X^*\}_{n\in\N}$ be a sequence of maximal monotone type (D) operators. Then, the lower limit $T=\liminf T_n$ is representable.
\end{theorem}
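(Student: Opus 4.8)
The plan is to exhibit a representative of $T$ manufactured from the Fitzpatrick functions $\varphi_{T_n}$. If $T=\emptyset$, it is represented by $(x,x^*)\mapsto\tfrac12\norm{x}^2+\tfrac12\norm{x^*}^2+1$, so assume $T\neq\emptyset$. Each $\varphi_{T_n}$ is proper, convex and lower semicontinuous; since $T_n$ is of type (D), hence of type (NI), one gets $\varphi_{T_n}(x,x^*)\geq\inner{x}{x^*}$ on $X\times X^*$ (immediate from the definition of type (NI)), while $\varphi_{T_n}(x,x^*)=\inner{x}{x^*}$ on $T_n$ by Fact~\ref{fact:fitz}. Define
\[
h(x,x^*)=\inf\Big\{\,\limsup_{n\to\infty}\varphi_{T_n}(a_n,b_n)\ :\ (a_n,b_n)\to(x,x^*)\ \text{in}\ X\times X^*\,\Big\}.
\]
A routine diagonal argument shows the infimum is attained and that $\epi h$ is the Kuratowski lower limit of the closed convex sets $\epi\varphi_{T_n}$; as a Kuratowski lower limit of convex sets is convex, $h$ is convex and lower semicontinuous, and proper once $T\subset\{(x,x^*):h(x,x^*)=\inner{x}{x^*}\}$ is known. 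I would then prove $h$ is a representative of $T$.

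For the inequality $h\geq\inner{\cdot}{\cdot}$: if $(a_n,b_n)\to(x,x^*)$ then $\varphi_{T_n}(a_n,b_n)\geq\inner{a_n}{b_n}\to\inner{x}{x^*}$, so $\limsup_n\varphi_{T_n}(a_n,b_n)\geq\inner{x}{x^*}$; taking the infimum gives $h(x,x^*)\geq\inner{x}{x^*}$. For $T\subset\{h=\inner{\cdot}{\cdot}\}$: given $(x,x^*)\in T$, pick $(a_n,b_n)\in T_n$ with $(a_n,b_n)\to(x,x^*)$; by Fact~\ref{fact:fitz}, $\varphi_{T_n}(a_n,b_n)=\inner{a_n}{b_n}\to\inner{x}{x^*}$, whence $h(x,x^*)\leq\inner{x}{x^*}$, and equality follows from the previous step. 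In particular $h$ is proper.

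The crux is the reverse inclusion $\{h=\inner{\cdot}{\cdot}\}\subset T$; the argument parallels the proof of Proposition~\ref{prop:main}. Suppose $h(x,x^*)=\inner{x}{x^*}$. By attainment there is $(a_n,b_n)\to(x,x^*)$ with $\limsup_n\varphi_{T_n}(a_n,b_n)\leq\inner{x}{x^*}=\lim_n\inner{a_n}{b_n}$. Because $\varphi_{T_n}(a_n,b_n)-\inner{a_n}{b_n}\geq 0$ and $\inner{a_n}{b_n}$ converges, this forces $\varphi_{T_n}(a_n,b_n)-\inner{a_n}{b_n}\to 0$ \emph{along the full sequence}. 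For each $n$, Fact~\ref{fact:typedeq} yields a solution $x_n$ of $b_n\in T_n(x_n)+J_{\eps_n}(x_n-a_n)$; write $b_n=x_n^*+w_n^*$ with $x_n^*\in T_n(x_n)$ and $w_n^*\in J_{\eps_n}(x_n-a_n)$. Evaluating the supremum defining $\varphi_{T_n}(a_n,b_n)$ at $(x_n,x_n^*)\in T_n$ gives $\varphi_{T_n}(a_n,b_n)\geq\inner{a_n}{b_n}+\inner{x_n-a_n}{w_n^*}$, and the $\eps_n$-subgradient inequality for $J_{\eps_n}$ (which is~\eqref{eq:eq2} with $a_n$ in place of $x$) gives $\inner{x_n-a_n}{w_n^*}\geq\tfrac12\norm{x_n-a_n}^2+\tfrac12\norm{w_n^*}^2-\eps_n$. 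Combining,
\[
\tfrac12\norm{x_n-a_n}^2+\tfrac12\norm{w_n^*}^2\ \leq\ \varphi_{T_n}(a_n,b_n)-\inner{a_n}{b_n}+\eps_n\ \longrightarrow\ 0,
\]
so $x_n\to x$ and $x_n^*=b_n-w_n^*\to x^*$ with $(x_n,x_n^*)\in T_n$; by definition of the lower limit, $(x,x^*)\in\liminf T_n=T$. Hence $\{h=\inner{\cdot}{\cdot}\}=T$ and $h$ represents $T$.

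Two points need care. First, the convexity of $h$: it is crucial to use the epigraphical lower limit — equivalently, the Kuratowski lower limit of the epigraphs — rather than, say, the pointwise $\limsup_n\varphi_{T_n}$, which need not be convex; the supporting fact that a Kuratowski lower limit of convex sets stays convex should be recalled or proved in a line. Second, in the reverse inclusion the passage from ``$\limsup=0$'' to convergence of the \emph{entire} sequence $\varphi_{T_n}(a_n,b_n)-\inner{a_n}{b_n}$ is exactly what makes $(x_n,x_n^*)\in T_n$ available for \emph{all} $n$, and hence places $(x,x^*)$ in $T$; extracting only a subsequence would land $(x,x^*)$ in the lower limit of a subsequence of $\{T_n\}$, which can be strictly larger than $T$.
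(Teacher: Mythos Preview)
Your proof is correct and takes a genuinely different route from the paper's. The paper works in the bidual: it forms $S=\liminf\widetilde T_n$ in $X^{**}\times X^*$, takes $h=\varphi_{S^{-1}}^*\big|_{X\times X^*}$ as the candidate representative, and establishes both $h\geq\pi$ and the identification $\{h=\pi\}=T$ through the weak$^*$-compactness machinery of Lemma~\ref{lem:tech}, tracking weak$^*$ cluster points $\bar x^{**}$, $\bar w^*$ of the sequences $(x_n)_n$, $(w_n^*)_n$ and showing they must all coincide with $x$ and $0$. Your construction stays entirely in $X\times X^*$: the representative is the epigraphical upper limit of the $\varphi_{T_n}$, so convexity and lower semicontinuity come for free from the Kuratowski lower limit of the epigraphs, and the crucial reverse inclusion is dispatched by the short direct estimate
\[
\tfrac12\norm{x_n-a_n}^2+\tfrac12\norm{w_n^*}^2\ \leq\ \varphi_{T_n}(a_n,b_n)-\inner{a_n}{b_n}+\eps_n,
\]
which bypasses the bidual, weak$^*$ limits, and Lemma~\ref{lem:tech} altogether. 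Your argument is more elementary and self-contained, and your observation that nonnegativity of $\varphi_{T_n}(a_n,b_n)-\inner{a_n}{b_n}$ upgrades $\limsup=0$ to full-sequence convergence is exactly the device that replaces the paper's ``all weak$^*$ cluster points agree'' step. The paper's route, on the other hand, makes the role of the extensions $\widetilde T_n$ explicit and yields along the way the structural information of Lemma~\ref{lem:tech} (monotone relatedness of the cluster points to $S$), which may be of independent use. One minor remark: the inequality $\varphi_{T_n}\geq\pi$ on $X\times X^*$ that you deduce from type~(NI) already follows from maximal monotonicity alone.
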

\begin{proof}
We recall the definition of the Fitzpatrick function of $S^{-1}$:
\[
\varphi_{S^{-1}}(z^*,z^{**})=\inner{z^*}{z^{**}}-\inf_{(u^{**},u^*)\in S}\inner{u^*-z^*}{u^{**}-z^{**}}.
\]
Let $(x,x^*)\in X\times X^*$ be arbitrary and consider sequences $(x_n)_n$ and ${(w_n^*)}_n$ as in Lemma~\ref{lem:tech}. 
Let $\bar x^{**}$ and $\bar w^*$ be any weak$^*$-limit points of $(x_n)_n$ and ${(w_n^*)}_n$, respectively.
Also by Lemma~\ref{lem:tech}, equation~\eqref{eq:forfitz} holds for certain subnets ${(x_{n_\alpha})}_{\alpha}$ and ${(w^*_{n_\alpha})}_{\alpha}$ of ${(x_n)}_n$ and ${(w^*_n)}_n$, respectively.

Taking infimum over every $(u^*,u^{**})\in S^{-1}$ on~\eqref{eq:forfitz}, and using the definition of $\varphi_{S^{-1}}(x^*-\bar w^*,\bar x^{**})$, 
\begin{multline}\label{eq:fitzs}
\varphi_{S^{-1}}(x^*-\bar w^*,\bar x^{**})\leq \inner{\bar x^{**}-x}{\bar w^*}+\inner{x^*-\bar w^*}{\bar x^{**}}\\-\dfrac{1}{2}\lim_{\alpha}\|x_{n_{\alpha}}-x\|^2-\dfrac{1}{2}\lim_{\alpha}\|w^*_{n_{\alpha}}\|^2
\end{multline}

Now, from the definition of the Fenchel conjugate of $\varphi_{S^{-1}}$,
\begin{align}
\varphi_{S^{-1}}^*(x,x^*)&=\sup_{(a^*,a^{**})\in X^*\times X^{**}}\Big\{\inner{a^*}{x}+\inner{a^{**}}{x^{*}}-\varphi_{S^{-1}}(a^*,a^{**})\Big\}\nonumber\\
 &\geq \inner{x^*-\bar w^*}{x}+\inner{\bar x^{**}}{x^{*}}-\varphi_{S^{-1}}(x^*-\bar w^*,\bar x^{**})\label{eq:fenchelfitz}.
\end{align}
Using~\eqref{eq:fitzs} on~\eqref{eq:fenchelfitz} we obtain
\begin{multline*}
\varphi_{S^{-1}}^*(x,x^*)\geq \inner{x^*-\bar w^*}{x}+\inner{\bar x^{**}}{x^{*}}-\inner{\bar x^{**}-x}{\bar w^*}\\
-\inner{x^*-\bar w^*}{\bar x^{**}}
+\dfrac{1}{2}\lim_{\alpha}\|x_{n_{\alpha}}-x\|^2+\dfrac{1}{2}\lim_{\alpha}\|w^*_{n_{\alpha}}\|^2
\end{multline*}
Thus implying,
\begin{equation}\label{eq:new1}
\varphi_{S^{-1}}^*(x,x^*)\geq \inner{x^*}{x}
+\dfrac{1}{2}\lim_{\alpha}\|x_{n_{\alpha}}-x\|^2+\dfrac{1}{2}\lim_{\alpha}\|w^*_{n_{\alpha}}\|^2
\geq \inner{x}{x^*}.
\end{equation}

Now we prove that $T$ is representable. Define $h:X\times X^*\to\R\cup\{+\infty\}$ as 
\[
h(x,x^*)=\varphi_{S^{-1}}^*(x,x^*)=\displaystyle\sup_{(a^*,a^{**})\in X^*\times X^{**}}\Big\{\inner{a^*}{x}+\inner{a^{**}}{x^{*}}-\varphi_{S^{-1}}(a^*,a^{**})\Big\}.
\]
Then $h$ is convex and strongly lower semicontinuous and, in view of~\eqref{eq:new1}, 
\[
h(x,x^*)\geq \inner{x}{x^*},\qquad\forall\,(x,x^*)\in X\times X^*. 
\]
In addition, if $(x,x^*)\in T$ then $(x^*,x)\in S^{-1}$, so using Fact~\ref{fact:fitz}, 
\[
h(x,x^*)=\varphi_{S^{-1}}^{*}(x,x^*)=\inner{x}{x^*}.
\]
Conversely, if $h(x,x^*)=\inner{x}{x^*}$ then equation~\eqref{eq:new1} implies that $x_{n_{\alpha}}\to x$ and $w_{n_{\alpha}}^*\to 0$, strongly. In view of equation~\ref{eq:normsci} in the proof of Lemma~\ref{lem:tech}, we have $\bar x^{**}=x$ and $\bar w^*=0$, so these weak$^*$-limit points are in fact strong limit points. Since these were chosen arbitrarily, the bounded sequences $(x_n)_n$ and ${(w_n^*)}_n$ possess an unique strong limit point. Hence  $(x_n)_n$ is strongly convergent to $x$ and, by Proposition~\ref{prop:main}, $(x,x^*)\in T$.
\end{proof}

\section{The variational sum and composition}\label{sec:varsumcomp}
The \emph{variational sum} of maximal monotone operators was defined by Attouch \emph{et al.}~\cite{AtBaTh94} in Hilbert Spaces and then generalized to reflexive Banach spaces by Revalski and Th\'era~\cite{RevThe99-1,RevThe99-2}.  Recent results about this kind of sum were given by Garc\'ia \emph{et al.}~\cite{MR2583894,GarciaLassonde2012,GarLasRev06}.

Similarly, the \emph{variational composition}, which was introduced by Pennanen \emph{et al.}~\cite{PeReTh03}, also for reflexive Banach spaces. 
We now extend such notions to general Banach spaces, for maximal monotone operators of type (D). 

Given a Banach space $X$, and two maximal monotone type (D) operators $T_1,T_2:X\tos X^*$, their \emph{variational sum} is defined as follows
\[
T_1\mathop{+}_{v}T_2=\bigcap_{\mathcal{I}}\liminf_n(T_{1,\lambda_n}+T_{2,\mu_n}),
\]
where 
\begin{equation}\label{eq:calI}
\mathcal{I}=\{(\lambda_n,\mu_n)_n\subset\R^2\::\:\lambda_n,\mu_n\geq 0,\, \lambda_n+\mu_n>0,\,\lambda_n,\mu_n\to 0\}
\end{equation}
and $T_\lambda$ denotes the Moreau-Yosida regularization of $T$, for $\lambda\geq 0$ ($T_0$ simply denotes the operator $T$).  
In the same way, for a type (D) operator $T:X\tos X^*$ and $A:Y\to X$ linear and continuous, the \emph{variational composition} $(A^*TA)_v$ is defined as
\[
(A^*TA)_v=\bigcap_{\mathcal{J}}\liminf_n A^*T_{\lambda_n}A,
\]
where 
\begin{equation}\label{eq:calJ}
\mathcal{J}=\{(\lambda_n)_n\subset\R\::\:\lambda_n > 0,\, \lambda_n\to 0^+\}.
\end{equation}

Garc\'ia and Lassonde~\cite{GarciaLassonde2012}, proved that both the variational sum and composition are representable, when the underlying space was reflexive, strictly convex with strictly convex dual. 
To recover this result in the general case, we need the following result due to Marques Alves and Svaiter.

\begin{fact}[{\cite[Lemma 3.5]{BM2011}}]\label{fact:masv}
Let $T_1,T_2:X\tos X^*$ be maximal monotone operators of type (D), 
and let $h_1,h_2:X\times X^*\to \R\cup\{+\infty\}$ be representatives of $T_1$ and $T_2$, respectively.
If 
\[
\bigcup_{\lambda>0}\lambda\Big(\proy_X(\dom(h_1))-\proy_X(\dom(h_2))\Big)
\] 
is a closed subspace then $T_1+T_2$ is a maximal monotone operator of type (D). 
\end{fact}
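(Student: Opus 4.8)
The plan is to produce an explicit \emph{strong} representative of $T_1+T_2$ and then invoke Fact~\ref{fact:typedeq}. Viewing $T_1+T_2$ as $A^*(T_1\times T_2)A$ for the diagonal embedding $A\colon X\to X\times X$, $Ax=(x,x)$ (so that $A^*(x_1^*,x_2^*)=x_1^*+x_2^*$), the natural candidate is the partial infimal convolution of $h_1$ and $h_2$ in the dual variable,
\[
h(x,x^*)=\inf\big\{h_1(x,x_1^*)+h_2(x,x_2^*)\ :\ x_1^*+x_2^*=x^*\big\}.
\]
This $h$ is convex, being the marginal over $y^*$ of the jointly convex function $(x,x^*,y^*)\mapsto h_1(x,x^*-y^*)+h_2(x,y^*)$, and it satisfies $h(x,x^*)\geq\inner{x}{x^*}$ for all $(x,x^*)$ since $h_i(x,x_i^*)\geq\inner{x}{x_i^*}$. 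Moreover, if $x^*\in(T_1+T_2)(x)$, choosing $x_i^*\in T_i(x)$ with $x_1^*+x_2^*=x^*$ gives $h(x,x^*)\leq h_1(x,x_1^*)+h_2(x,x_2^*)=\inner{x}{x^*}$, hence $h(x,x^*)=\inner{x}{x^*}$; thus $T_1+T_2\subset\{h=\pi\}$ unconditionally.

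The crux is to show that, wherever it is finite, the infimum defining $h$ is attained and that $h$ is lower semicontinuous (equivalently $h=h^{**}$); this is the only place the hypothesis is used. The domain of $x^*\mapsto h(x,x^*)$ is nonempty precisely when $x\in\proy_X(\dom h_1)\cap\proy_X(\dom h_2)$, and the assumption that $\bigcup_{\lambda>0}\lambda\big(\proy_X(\dom h_1)-\proy_X(\dom h_2)\big)$ is a closed subspace is exactly an Attouch--Br\'ezis-type constraint qualification under which such a parametrized convex minimization has an exact, lower semicontinuous value function and conjugation commutes with the operations involved. I would make this precise by rewriting $h$ as a sum of two jointly convex functions on $X\times X^*\times X^*$ (one depending on $(x,x_1^*)$, the other on $(x,x_2^*)$, plus the affine constraint $x_1^*+x_2^*=x^*$) to which the Attouch--Br\'ezis theorem applies, obtaining simultaneously: (i) attainment of the infimum on $\dom h$; (ii) lower semicontinuity of $h$; and (iii) a closed formula for $h^*$. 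With (i) in hand the reverse inclusion is routine: if $h(x,x^*)=\inner{x}{x^*}$, pick an attaining decomposition $x^*=x_1^*+x_2^*$; then $h_1(x,x_1^*)+h_2(x,x_2^*)=\inner{x}{x_1^*}+\inner{x}{x_2^*}$, and since each summand dominates its duality product, both equalities hold termwise, so $x_i^*\in T_i(x)$ and $x^*\in(T_1+T_2)(x)$. Hence $h$ represents $T_1+T_2$, which is therefore representable and, in particular, monotone.

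To upgrade to type (D) I would use Fact~\ref{fact:typedeq}: since $T_1$ and $T_2$ are of type (D), choose $h_1,h_2$ to be \emph{strong} representatives, i.e.\ $h_i^*(x_i^*,x^{**})\geq\inner{x_i^*}{x^{**}}$ on $X^*\times X^{**}$. The formula for $h^*$ delivered by the same Attouch--Br\'ezis argument — whose applicability again rests on the same closed-subspace hypothesis — identifies $h^*$ with the infimal convolution of $h_1^*$ and $h_2^*$ performed in the $X^*$-variable with the $X^{**}$-variable held common,
\[
h^*(x^*,x^{**})=\inf\big\{h_1^*(x_1^*,x^{**})+h_2^*(x_2^*,x^{**})\ :\ x_1^*+x_2^*=x^*\big\}.
\]
Consequently $h^*(x^*,x^{**})\geq\inner{x_1^*}{x^{**}}+\inner{x_2^*}{x^{**}}=\inner{x^*}{x^{**}}$ for every decomposition, so $h$ is a strong representative of $T_1+T_2$. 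Since a strong representative function generates a maximal monotone operator of type (D) (the ``generation'' direction associated with Fact~\ref{fact:typedeq}), it follows that $T_1+T_2$ is maximal monotone of type (D).

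I expect the main obstacle to be the second paragraph: carrying out the Attouch--Br\'ezis / Rockafellar qualification argument in a possibly non-reflexive space so that it yields at once the lower semicontinuity of $h$, the exactness of the infimal convolution, and the clean conjugate formula needed for the strong-representative step, while correctly keeping track of the bidual variable $x^{**}$ when conjugating. Everything else is bookkeeping with the Fenchel--Young inequality.
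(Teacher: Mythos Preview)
The paper does not prove this statement: it is stated as Fact~\ref{fact:masv} and attributed to \cite[Lemma~3.5]{BM2011} without proof, so there is no argument in the present paper to compare your proposal against.

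That said, your outline is essentially the proof given in the cited reference. The inf-convolution
\[
h(x,x^*)=\inf\{h_1(x,x_1^*)+h_2(x,x_2^*):x_1^*+x_2^*=x^*\}
\]
is indeed the natural candidate; writing it as the image under $(x,y_1^*,y_2^*)\mapsto(x,y_1^*+y_2^*)$ of $\psi_1+\psi_2$ with $\psi_i(x,y_1^*,y_2^*)=h_i(x,y_i^*)$, one checks that $\dom\psi_1-\dom\psi_2=(\proy_X\dom h_1-\proy_X\dom h_2)\times X^*\times X^*$, so the hypothesis is exactly the Attouch--Br\'ezis qualification for this pair. This yields the exact conjugate formula you wrote and, after restricting the common bidual variable, the inequality $h^*\geq\pi$ on $X^*\times X^{**}$. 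The termwise Fenchel--Young argument for $\{h=\pi\}\subset T_1+T_2$ is correct once attainment is secured.

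One point to be careful about: the hypothesis is stated for \emph{given} representatives $h_1,h_2$, whereas your third paragraph replaces them by strong representatives. You should either argue that the closed-subspace condition is insensitive to the choice of representative in the Fitzpatrick family (so you may pass to strong ones without loss), or---cleaner---keep the original $h_1,h_2$ throughout and observe that the formula for $h^*$ already shows $h^*\geq\pi$ on $X^*\times X^{**}$ \emph{provided} $h_i^*\geq\pi$ there; for this it suffices that each $h_i$ majorize the Fitzpatrick function $\varphi_{T_i}$, which every representative does. Also note that Fact~\ref{fact:typedeq} as stated presupposes maximal monotonicity of the operator, so to conclude you need the (standard) companion fact that any operator admitting a strong representative is automatically maximal monotone; this is implicit in your last sentence but worth citing explicitly.
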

Observe that this fact implies, in particular, that $T_1+T_2$ is of type (D) whenever $T_1$ (or $T_2$) is everywhere defined.

We have the following theorem.
\begin{theorem}\label{teo:varsum}
Let $X$ be a real Banach space and let $T_1,T_2:X\tos X^*$ be maximal monotone operators of type (D). Then their variational sum $\ds T_1\mathop{+}_v T_2$ is representable.
\end{theorem}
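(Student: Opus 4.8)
The plan is to express each variational-sum operator $T_1\mathop{+}_v T_2$ as an intersection of lower limits of sequences of maximal monotone operators of type (D), and then combine Theorem~\ref{teo:main} with the elementary fact that an intersection of representable operators is representable. First I would fix a sequence $(\lambda_n,\mu_n)_n\in\mathcal I$ and consider the operators $A_n=T_{1,\lambda_n}+T_{2,\mu_n}$. By Fact~\ref{fact:myreg}, each $T_{1,\lambda_n}$ and $T_{2,\mu_n}$ is maximal monotone of type (D) with full domain $X$ (and is still of type (D) trivially when $\lambda_n=0$ or $\mu_n=0$, since then it is the original type (D) operator). Since at least one of $\lambda_n,\mu_n$ is positive, at least one summand is everywhere defined, so by the remark following Fact~\ref{fact:masv}, $A_n=T_{1,\lambda_n}+T_{2,\mu_n}$ is maximal monotone of type (D). Therefore $\{A_n\}_n$ is a sequence of maximal monotone type (D) operators, and Theorem~\ref{teo:main} applies to give that $\liminf_n A_n$ is representable, say by a convex, proper, strongly lower semicontinuous function $h_{(\lambda_n,\mu_n)}:X\times X^*\to\R\cup\{+\infty\}$ with $h_{(\lambda_n,\mu_n)}\geq \pi$ on $X\times X^*$.

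Next I would handle the intersection over $\mathcal I$. Writing $G=\graph\big(T_1\mathop{+}_v T_2\big)=\bigcap_{\mathcal I}\graph\big(\liminf_n A_n\big)$, I claim the function
\[
h(x,x^*)=\sup_{(\lambda_n,\mu_n)_n\in\mathcal I} h_{(\lambda_n,\mu_n)}(x,x^*)
\]
is a representative of $T_1\mathop{+}_v T_2$. Indeed, a pointwise supremum of convex, strongly lower semicontinuous functions bounded below by $\pi$ is again convex, strongly lower semicontinuous and bounded below by $\pi$; it remains proper provided $T_1\mathop{+}_v T_2$ is nonempty as an operator, which I would address below. For a point $(x,x^*)\in G$, every $h_{(\lambda_n,\mu_n)}(x,x^*)=\inner{x}{x^*}$, hence $h(x,x^*)=\inner{x}{x^*}$. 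Conversely, if $h(x,x^*)=\inner{x}{x^*}$, then $h_{(\lambda_n,\mu_n)}(x,x^*)\le\inner{x}{x^*}$ for every sequence in $\mathcal I$, and combined with $h_{(\lambda_n,\mu_n)}\ge\pi$ this forces $h_{(\lambda_n,\mu_n)}(x,x^*)=\inner{x}{x^*}$ for all of them, i.e. $(x,x^*)\in\graph(\liminf_n A_n)$ for every $(\lambda_n,\mu_n)_n\in\mathcal I$, so $(x,x^*)\in G$. This gives $T_1\mathop{+}_v T_2=\{(x,x^*):h(x,x^*)=\inner{x}{x^*}\}$, which is exactly the representability of $T_1\mathop{+}_v T_2$.

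The main obstacle I anticipate is properness of $h$, equivalently showing that $\dom h\ne\emptyset$, i.e. that $T_1\mathop{+}_v T_2$ is a nontrivial object — or at least that the intersection of domains of the $h_{(\lambda_n,\mu_n)}$ is nonempty even when the graph intersection $G$ is empty. One clean way around this is to argue that for a fixed well-chosen base sequence (for instance $\lambda_n=\mu_n\to 0^+$) the diagonal Moreau-Yosida regularizations already have enough structure that $\dom h_{(\lambda_n,\lambda_n)}$ meets all the others; alternatively, one can invoke that $\dom(T_1+T_2)$-type points, or points in $\Dom(T_{1,\lambda_n})\cap\Dom(T_{2,\mu_n})=X$, always feed through the construction of Theorem~\ref{teo:main} so that $\proy_X(\dom h_{(\lambda_n,\mu_n)})=X$ for every choice, making $\dom h$ automatically project onto all of $X$. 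I would verify that the construction of $h$ in the proof of Theorem~\ref{teo:main} (namely $h=\varphi_{S^{-1}}^*$ restricted appropriately) indeed has full projection onto $X$ in its domain, since for every $(x,x^*)$ the solutions $x_n$ of the governing inclusion exist and are bounded; this yields properness of $h$ for free. With properness secured, the rest is the routine bookkeeping sketched above, and the theorem follows.
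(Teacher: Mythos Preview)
Your argument is correct and follows essentially the same route as the paper: show each $T_{1,\lambda_n}+T_{2,\mu_n}$ is maximal monotone of type (D) via Facts~\ref{fact:myreg} and~\ref{fact:masv}, apply Theorem~\ref{teo:main} to each lower limit, and then use that an intersection of representable operators is representable. The paper simply invokes this last fact without spelling out the supremum construction, and you are actually more careful than the paper in handling the case where one of $\lambda_n,\mu_n$ vanishes.

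The ``obstacle'' you raise about properness of $h=\sup h_{(\lambda_n,\mu_n)}$ is not a genuine difficulty: if the variational sum is nonempty your own argument already gives properness, and if it is empty the operator is trivially representable (e.g.\ by $h(x,x^*)=\tfrac12\|x\|^2+\tfrac12\|x^*\|^2+1$, which is proper, convex, lower semicontinuous and strictly above $\pi$). So the last paragraph of your proposal can be dropped entirely.
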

\begin{proof}
Take any sequence $(\lambda_n,\mu_n)_n\in\mathcal{I}$ (see~\eqref{eq:calI}). From Fact~\ref{fact:myreg}, for any $n\in\N$, the Moreau-Yosida regularizations $T_{1,\lambda_n}$ and $T_{2,\mu_n}$ of $T_1$ and $T_2$, respectively are also of type (D) and everywhere defined and, by Fact~\ref{fact:masv}, $T_{1,\lambda_n}+T_{2,\mu_n}$ is maximal monotone of type (D).  Thus, $(T_{1,\lambda_n}+T_{2,\mu_n})_n$ is a sequence of type (D) operators, so its lower limit is representable and, since arbitrary intersection of representable operators is also representable, so is the variational sum.
\end{proof}

As done in~\cite[\S 5]{GarciaLassonde2012}, we can express the variational composition in terms of a variational sum. Let $X,Y$ be real Banach spaces, $T:X\tos X^*$ be maximal monotone and $A:Y\to X$ be linear continuous. Define $T^{\#},N_A:Y\times X\tos Y^*\times X^*$, respectively as $T^{\#}(y,x)=\{0\}\times T(x)$ and
\[
N_A(y,x)=\partial\delta_{\graph(A)}=
\begin{cases}
\{(A^*x^*,-x^*)\::\:x^*\in X^*\},&\text{if }(y,x)\in A,\\
\emptyset,&\text{otherwise},
\end{cases}
\]
where $\delta_{\graph(A)}$ is the indicator function of the graph of $A$. Then
\[
y^*\in A^*TA(y)\quad\iff\quad (y^*,0)\in (T^{\#}+N_A)(y,Ay),
\]
and we have the following lemma.
\begin{lemma}\label{lem:varcomp}
Let $T:X\tos X^*$ be a maximal monotone operator of type (D) and $A:Y\to X$ be linear continuous. If $\Dom(T)=X$ then $A^*TA$ is maximal monotone of type (D).
\end{lemma}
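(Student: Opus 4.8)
The plan is to reduce the claim about $A^*TA$ to the representability-style machinery already developed, specifically to \textbf{Fact~\ref{fact:masv}} applied to the operators $T^{\#}$ and $N_A$ on the product space $Y\times X$, using the equivalence $y^*\in A^*TA(y)\iff (y^*,0)\in(T^{\#}+N_A)(y,Ay)$ stated just above. The key structural facts to assemble are: (i) $T^{\#}$ is maximal monotone of type (D) on $Y\times X$ whenever $T$ is on $X$, and moreover $\Dom(T^{\#})=Y\times X$ precisely because $\Dom(T)=X$; (ii) $N_A=\partial\delta_{\graph(A)}$ is the subdifferential of a proper lsc convex function, hence maximal monotone of type (D) by the remark after Definition~\ref{def:typeD}; (iii) $T^{\#}+N_A$ is maximal monotone of type (D) by Fact~\ref{fact:masv} — and here the domain condition $\Dom(T)=X$ does the work, since it makes $T^{\#}$ everywhere defined, so the constraint-qualification union in Fact~\ref{fact:masv} is the whole space $Y\times X$ (as noted in the observation immediately following that fact); and finally (iv) transport maximal monotonicity and type (D) from $T^{\#}+N_A$ back to $A^*TA$ through the graph identification.

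The steps, in order, would be: First, verify that $T^{\#}$ is maximal monotone — monotonicity is immediate from that of $T$ since the first coordinate contributes nothing to the pairing $\inner{(y_1,x_1)-(y_2,x_2)}{(0,x_1^*)-(0,x_2^*)}=\inner{x_1-x_2}{x_1^*-x_2^*}$, and maximality plus type (D) follow because $T^{\#}$ is, up to the obvious product structure, a direct sum of the zero operator on $Y$ (which is $\partial(0)$, type (D)) and $T$; a direct sum of type (D) operators is type (D), e.g.\ by checking condition~3 of Fact~\ref{fact:typedeq} coordinatewise, or by exhibiting a strong representative as the sum of strong representatives. Second, note $\Dom(T^{\#})=Y\times\Dom(T)=Y\times X$. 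Third, apply Fact~\ref{fact:masv} to $h_1$ a representative of $T^{\#}$ and $h_2=\delta_{\graph(A)}+\langle\cdot,\cdot\rangle$-type representative (e.g.\ the Fitzpatrick function) of $N_A$: since $\proy_{Y\times X}(\dom h_1)=Y\times X$, the difference set is all of $Y\times X$, which is trivially a closed subspace, so $T^{\#}+N_A$ is maximal monotone of type (D). Fourth, use the graph equivalence: the map $(y,y^*)\mapsto((y,Ay),(y^*,0))$ identifies $\graph(A^*TA)$ with the slice of $\graph(T^{\#}+N_A)$ over points of the form $((y,Ay),(y^*,0))$, and one checks directly that this slice operator inherits monotonicity, maximality, and type (D) — for type (D), given $(\eta^{**},\eta^*)\in\widetilde{A^*TA}$ one pushes it into $\widetilde{T^{\#}+N_A}$ via the (adjoint of the) embedding, extracts the approximating bounded net in weak$^*\times$strong, and projects it back, using continuity of $A$ and $A^{**}$ to keep the net bounded and convergent.

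The main obstacle I expect is \textbf{step four}, the transport of the type (D) property back through the composition with $A$. The equivalence of inclusions is an easy algebraic identity, but type (D) is a statement about the \emph{biduals}: one must show that $\widetilde{A^*TA}\subset Y^{**}\times Y^*$ is recovered from $\widetilde{T^{\#}+N_A}\subset(Y\times X)^{**}\times(Y\times X)^*$ in a way compatible with the weak$^*\times$strong approximation. Concretely, a point monotonically related to $A^*TA$ in the bidual must be lifted to a point monotonically related to $T^{\#}+N_A$ in $(Y\times X)^{**}$, then approximated by a bounded net from $T^{\#}+N_A$, and this net must be shown to descend to a bounded weak$^*\times$strong-approximating net from $\graph(A^*TA)$ — the subtlety being that $N_A$'s bidual involves $\widetilde{N_A}$ on $(Y\times X)^{**}$ and one needs the graph of $A$ to behave well under bitransposition, i.e.\ that $\graph(A^{**})\cap(Y^{**}\times X)$ and the closure of $\graph(A)$ interact correctly. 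An alternative route that sidesteps much of this is to invoke the characterization via strong representatives (Fact~\ref{fact:typedeq}(2)): build an explicit strong representative of $A^*TA$ from one of $T^{\#}+N_A$ by the infimal-projection formula $h_{A^*TA}(y,y^*)=\inf\{h_{T^{\#}+N_A}((y,Ay),(y^*,x^*)) : x^*\in X^*\}$, and verify it is proper, lsc (using $\Dom(T)=X$ for properness/lsc of the infimal value function), majorizes the pairing, and has the conjugate lower bound; this is essentially the argument used for Theorem~\ref{teo:main} and is likely the cleanest path.
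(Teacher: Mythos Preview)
Your setup (steps (i)--(iii)) matches the paper's proof exactly: one lifts to $T^{\#}$ and $N_A$ on $Y\times X$, observes both are type (D), that $T^{\#}$ is everywhere defined because $\Dom(T)=X$, and then applies Fact~\ref{fact:masv} to conclude $T^{\#}+N_A$ is maximal monotone of type (D). The paper also writes out explicitly
\[
T^{\#}+N_A=\big\{(y,Ay,A^*x^*,w^*-x^*)\::\:y\in Y,\ x^*\in X^*,\ w^*\in T(Ay)\big\},
\]
which you will need in step four.

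Where you diverge from the paper is precisely the step you flag as the obstacle. Rather than chasing approximating nets in the bidual or building a strong representative by infimal projection, the paper uses the equivalence of type (D) with type (NI) (cited just before Fact~\ref{fact:typedeq}) and verifies the (NI) inequality for $A^*TA$ directly. Given $(v^{**},v^*)\in Y^{**}\times Y^*$, apply the type (NI) property of $T^{\#}+N_A$ at the point $\big((v^{**},A^{**}v^{**}),(v^*,0)\big)$: the infimum runs over quadruples $(y,Ay,A^*x^*,w^*-x^*)$ with $w^*\in T(Ay)$, giving
\[
\inf_{y,\,w^*,\,x^*}\Big[\inner{v^{**}-y}{v^*-A^*x^*}+\inner{A^{**}v^{**}-Ay}{x^*-w^*}\Big]\le 0.
\]
Since $\inner{A^{**}v^{**}-Ay}{x^*}=\inner{v^{**}-y}{A^*x^*}$, the $x^*$-terms cancel and one is left with
\[
\inf_{y,\,w^*\in T(Ay)}\inner{v^{**}-y}{v^*-A^*w^*}\le 0,
\]
which is exactly the (NI) condition for $A^*TA$. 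This is a two-line computation once you have the explicit description of $\graph(T^{\#}+N_A)$, and it bypasses entirely the bidual-lifting and descent issues you anticipated. Your proposed alternatives could in principle be made to work, but they are considerably heavier; the (NI) route is the missing idea that dissolves your ``main obstacle''.
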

\begin{proof}
Note that both $T^{\#}$ and $N_A$ are maximal monotone of type (D) in $Y\times X$, as $T$ is of type (D) and $N_A$ is a subdifferential. Moreover, $T^{\#}$ is everywhere defined since $T$ is. Hence, by Fact~\ref{fact:masv}, $T^{\#}+N_A$ is of type (D).

Observe that $\Dom(T^{\#}+N_A)=\Dom(N_A)=\graph(A)$, therefore
\[
T^{\#}+N_A=\left\{(y,Ay,A^*x^*,w^*-x^*)\in Y\times X\times Y^*\times X^*\::\:
\begin{array}{c}
y\in Y,\,x^*\in X^*\\
w^*\in T(A(y))
\end{array}\right\}.
\]

The maximal monotonicity of $A^*TA$ is straightforward, so remains to prove that $A^*TA$ is  maximal monotone of type (D). In view of the equivalency between the type (D) and type (NI) classes~\cite{BSMMA-TypeD}, we will prove that $A^*TA$ is of type (NI).
Given any $(v^{**},v^*)\in Y^{**}\times Y^*$, take $(u^{**},u^*)=(A^{**}v^{**},0)\in X^{**}\times X^*$, and using that $T^{\#}+N_A$ is of type (D), we have
\[
\inf_{\substack{y\in Y\\w^*\in T(A(y))\\x^*\in X^*}}\inner{v^{**}-y}{v^*-A^*x^*}+\inner{A^{**}v^{**}-Ay}{x^*-w^*}\leq 0
\]
Rearranging the last equation, 
we obtain
\[
\inf_{\substack{y\in Y\\w^*\in T(A(y))}}\inner{v^{**}-y}{v^*-A^*w^*}\leq 0,
\]
and the lemma follows.
\end{proof}

Finally, we have the following theorem.
\begin{theorem}\label{teo:varcomp}
Let $X,Y$ be real Banach spaces, let $T:X\tos X^*$ be a maximal monotone operator of type (D) and let $A:Y\to X$ be a linear continuous map. Then the variational composition $(A^*TA)_v$ is representable.
\end{theorem}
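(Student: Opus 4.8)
The plan is to mirror the proof of Theorem~\ref{teo:varsum}, using Lemma~\ref{lem:varcomp} in place of Fact~\ref{fact:masv} to manufacture a sequence of type (D) operators on $Y$, and then to invoke Theorem~\ref{teo:main} together with the stability of representability under arbitrary intersections.

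Concretely, I would first fix an arbitrary sequence $(\lambda_n)_n\in\mathcal{J}$ (see~\eqref{eq:calJ}), so that $\lambda_n>0$ for every $n$. By Fact~\ref{fact:myreg}, each Moreau--Yosida regularization $T_{\lambda_n}:X\tos X^*$ is maximal monotone of type (D) and satisfies $\Dom(T_{\lambda_n})=X$. This last property is exactly the hypothesis required by Lemma~\ref{lem:varcomp}, so applying that lemma to $T_{\lambda_n}$ and $A$ shows that $A^*T_{\lambda_n}A:Y\tos Y^*$ is maximal monotone of type (D). Hence $\{A^*T_{\lambda_n}A\}_n$ is a sequence of maximal monotone type (D) operators on the real Banach space $Y$, and Theorem~\ref{teo:main} gives that its lower limit $\liminf_n A^*T_{\lambda_n}A$ is representable.

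Finally, since $(A^*TA)_v=\bigcap_{\mathcal{J}}\liminf_n A^*T_{\lambda_n}A$ is an intersection of representable operators, and an arbitrary intersection of representable operators is again representable (a representative of the intersection being the pointwise supremum of representatives of its members when the intersection is nonempty, and an explicit coercive representative such as $(x,x^*)\mapsto\tfrac{1}{2}\|x\|^2+\tfrac{1}{2}\|x^*\|^2+1$ otherwise), we conclude that $(A^*TA)_v$ is representable.

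I do not expect a genuine obstacle here beyond bookkeeping: the substantive work has already been carried out in Lemma~\ref{lem:varcomp}, which upgrades $A^*T_{\lambda}A$ to type (D) by exploiting that the regularization is everywhere defined, and in Theorem~\ref{teo:main}, which handles representability of the lower limit. The only point deserving a moment's care is that Lemma~\ref{lem:varcomp} cannot be applied to $T$ itself — $\Dom(T)$ need not be all of $X$ — which is precisely why the variational composition must be defined through the regularizations $T_{\lambda_n}$ rather than through $T$ directly.
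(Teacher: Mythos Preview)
Your proposal is correct and follows essentially the same route as the paper: invoke Fact~\ref{fact:myreg} to ensure each $T_{\lambda_n}$ is type (D) with full domain, apply Lemma~\ref{lem:varcomp} to obtain that $A^*T_{\lambda_n}A$ is type (D), use Theorem~\ref{teo:main} for representability of the lower limit, and conclude by stability under intersections. The paper's proof is terser but structurally identical; your added justification for the intersection step and the remark on why $T$ itself cannot be used directly are helpful elaborations rather than departures.
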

\begin{proof}
From Fact~\ref{fact:myreg} and Lemma~\ref{lem:varcomp}, for any sequence $(\lambda_n)_n\in\mathcal{J}$ (see~\eqref{eq:calJ}) and $n\in\N$, $A^*T_{\lambda_n}A$ is maximal monotone of type (D).  Thus, $(A^*T_{\lambda_n}A)_n$ is a sequence of type (D) operators, so its lower limit is representable and, so is the variational composition.
\end{proof}

\section{Acknowledgements}
The authors were partially supported by the MathAmSud regional program: project 13MATH-01.

M. Marques Alves was partially supported  Brazilian CNPq grants 305414/2011-9 and 406250/2013-8.

\end{document}